\theoremstyle{plain}\newtheorem{Theorem}{Theorem}[section]
\theoremstyle{plain}
\theoremstyle{plain}\newtheorem{Corollary}[Theorem]{Corollary}
\theoremstyle{plain}\newtheorem{Lemma}[Theorem]{Lemma}
\theoremstyle{plain}\newtheorem{Proposition}[Theorem]{Proposition}
\theoremstyle{definition}\newtheorem{Definition}[Theorem]{Definition}
\theoremstyle{definition}
\theoremstyle{definition}
\theoremstyle{definition}\newtheorem{Remark}[Theorem]{Remark}
\def\CC{{\mathcal{C}}}
\def\CI{{\mathcal{I}}}
\def\P{{\mathbb{P}}}
\def\Z{{\mathbb{Z}}}
\def\End{\mathrm{End}}   \def\Endbar{\underline{\mathrm{End}}}
\def\Hom{\mathrm{Hom}}   \def\Hombar{\underline{\mathrm{Hom}}}
\def\ker{\mathrm{ker}}
\def\Id{\mathrm{Id}}
\def\Im{\mathrm{Im}}
\def\Ind{\mathrm{Ind}}
\def\mod{\mathrm{mod}}    \def\modbar{\underline{\mathrm{mod}}}
\def\rad{\mathrm{rad}}
\def\Res{\mathrm{Res}}
\def\soc{\mathrm{soc}}
\def\Ml{M}
\def\ten{\otimes}
\def\tenB{\otimes_B}
\title{On the graded center of the stable category of a finite $p$-group}
\author{Markus Linckelmann, Radu Stancu}
\date{\today}
\begin{document}

\begin{abstract} We show that for any finite $p$-group $P$ of rank at least $2$
and any algebraically closed field $k$ of characteristic $p$ the graded center
$Z^*(\modbar(kP))$ of the stable module category of finite-dimensional $kP$-modules has
infinite dimension in each odd degree, and if $p=2$ also
in each even degree. In particular, this provides examples of symmetric
algebras $A$ for which $Z^0(\modbar(A))$ is not finite-dimensional,
answering a question raised in \cite{Li2}.
\end{abstract}

\maketitle

\section{Introduction}

The graded center of a $k$-linear triangulated category $(\CC;\Sigma)$ over a commutative
ring $k$ is the graded $k$-module $Z^*(\CC) = Z^*(\CC;\Sigma)$ which in degree $n\in \Z$
consists of all $k$-linear natural transformations $\varphi : \Id_\CC \rightarrow
\Sigma^n$ satisfying
$\Sigma\varphi = (-1)^n\varphi\Sigma$; this becomes a graded
commutative $k$-algebra with multiplication essentially induced by composition in $\CC$.
We refer to \cite{Li2} for more details.
By \cite{Hap}, the stable category $\modbar(A)$ of finitely generated modules over a
finite-dimensional self-injective algebra $A$ is a triangulated category with shift
functor the inverse of the Heller operator. Its graded center has been calculated for
Brauer tree algebras \cite{KeLi} and in particular also uniserial algebras
\cite{KrYe}. These calculations suggest that Tate cohomlogy rings of blocks and the
graded centers of their stable module categories are closely related. Since the Tate
cohomology of a block is an invariant of the fusion system of the block, a good
understanding of graded centers might shed some light on the question to what extent the
fusion system of a block is determined by its stable module category. These calculations
also suggest that in order to determine the graded center of the stable module category
of a block one will need to do this first for a defect group algebra of the block, hence
for finite $p$-group algebras. This is what motivates the present paper. The following
result shows that $Z^0(\modbar(A))$ need not be finite-dimensional, answering a question
raised in \cite{Li2}.

\begin{Theorem} \label{2groupsstablecenter}
Let $P$ be a finite $2$-group of rank at least $2$ and $k$ an algebraically closed field
of characteristic~$2$. Evaluation at the trivial $kP$-module
induces a surjective homomorphism of graded $k$-algebras
$Z^*(\modbar(kP)) \longrightarrow \hat H^*(P;k)$
whose kernel $\CI$ is a nilpotent homogeneous ideal which is infinite-dimensional in each
degree; in particular,
$Z^0(\modbar(kP))$ has infinite dimension.
\end{Theorem}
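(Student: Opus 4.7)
The plan is to establish the theorem by addressing its three assertions in turn: that evaluation at $k$ is a surjective graded $k$-algebra homomorphism onto $\hat H^*(P;k)$, that its kernel $\CI$ is infinite-dimensional in each degree, and that $\CI$ is nilpotent. Evaluation sends $\varphi \in Z^n(\modbar(kP))$ to $\varphi_k \in \Hom_{\modbar(kP)}(k,\Omega^{-n}k) = \hat H^n(P;k)$; this respects addition, and since the product in $Z^*$ is (up to sign) composition, it sends the product to the Yoneda product of Tate classes, hence is a graded algebra homomorphism. For surjectivity I would construct a section: given $\zeta \in \hat H^n(P;k)$, define $\varphi^\zeta_M := \zeta \tenk \Id_M$, using the natural stable isomorphism $\Omega^{-n}(M) \cong \Omega^{-n}(k) \tenk M$, valid because tensoring the minimal projective resolution of $k$ with any $kP$-module $M$ yields a projective resolution of $M$ (since $kP \tenk M$ is free over $kP$). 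Then $\varphi^\zeta_k = \zeta$, so evaluation is split surjective and $\CI$ is a homogeneous ideal.

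The main content is the infinite-dimensionality of $\CI$ in each degree $n$, where I would exploit the rank-at-least-2 hypothesis through the geometry of $\mathrm{Proj}\, H^*(P;k)$: this projective variety has positive Krull dimension and hence infinitely many $k$-rational closed points. To each closed point $\lambda$ one attaches an indecomposable non-projective $kP$-module $M_\lambda$ with support $\{\lambda\}$ in $\mathrm{Proj}\, H^*(P;k)$ (for the Klein four group these are the two-dimensional bricks parametrized by $\P^1(k)$ arising from shifted cyclic subgroups, with an analogous family available for any rank-at-least-2 $p$-group via restriction to elementary abelian subgroups). A direct calculation in characteristic $2$ shows that $\End_{\modbar(kP)}(M_\lambda)$ is strictly larger than the image of $\hat H^0(P;k)$ under the central action, so there are genuinely extra stable endomorphisms of $M_\lambda$ not of the form $\zeta \tenk \Id_{M_\lambda}$. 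For each $\lambda$ I would take such an extra endomorphism (promoted to degree $n$ via the $\Omega$-periodicity of $M_\lambda$) as the value $\varphi^\lambda_{M_\lambda}$, and set $\varphi^\lambda_k = 0$ together with $\varphi^\lambda_{M_\mu} = 0$ for $\mu \neq \lambda$. The resulting family $\{\varphi^\lambda\}_\lambda$ is linearly independent by evaluation at the $M_\lambda$.

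The hard part is showing that these locally specified values actually extend to a natural transformation on the entire stable category, since naturality must be checked against every morphism in $\modbar(kP)$, including ones between $M_\lambda$ and indecomposables with larger support. I would attempt this via a support-theoretic argument: extend $\varphi^\lambda$ by zero across the thick tensor-ideal of modules whose variety misses $\lambda$, and reduce the remaining naturality conditions to a finite-dimensional consistency check using Bousfield localization of $\modbar(kP)$ at the prime $\lambda$ of $\mathrm{Proj}\, H^*(P;k)$. Finally, nilpotence of $\CI$ should follow from the fact that $\CI$ is contained in the graded Jacobson radical of $Z^*(\modbar(kP))$, together with a uniform bound on the nilpotency index coming from the Loewy length of $kP$: every $kP$-module is a finite iterated extension of copies of the trivial module, and elements of $\CI$ annihilate $k$, so a sufficiently long product of elements of $\CI$ must vanish on every module.
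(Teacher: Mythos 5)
Your approach to surjectivity (tensoring a Tate class with the identity) and to nilpotence of $\CI$ (via the Loewy length of $kP$) is essentially the same as the paper's, which packages both into Lemma~\ref{tatestablecenter} by reference to \cite[Proposition 1.3]{Li2}. However, the central part of your argument — the construction of infinitely many linearly independent elements in each degree of $\CI$ — has a genuine gap, and it is exactly the point you yourself flag as ``the hard part.''

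You correctly isolate the right family of modules (the two-dimensional bricks over rank-$2$ elementary abelian subgroups, parametrised by $\P^1(k)$; the paper uses the modules $M = kP\,x^{p-1}$ for $x = \lambda(s-1)+\mu(t-1)$), and you correctly observe that each such $M$ supports a stable endomorphism not coming from $\hat H^0(P;k)$. But specifying $\varphi^\lambda(M_\lambda)$ to be such an endomorphism and $\varphi^\lambda$ to be zero on all other indecomposables does not by itself define a natural transformation: one must check naturality against \emph{every} morphism $U \to M_\lambda$ and $M_\lambda \to V$ in $\modbar(kP)$, not just those between the chosen bricks. Your proposed repair via support-theoretic extension by zero and Bousfield localisation at $\lambda$ is a sketch of an idea, not an argument — the stable category of $kP$ is not a sheaf over $\mathrm{Proj}\,H^*(P;k)$, morphisms between modules with disjoint support need not vanish, and ``a finite-dimensional consistency check'' is asserted rather than exhibited. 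The paper avoids all of this by choosing the extra endomorphism to be an \emph{almost vanishing morphism}: the morphism $\zeta_M : M \to \Omega(M)$ coming from the almost split sequence terminating in $M$. Almost vanishing morphisms are, by definition, annihilated by any non-split-epimorphism into $M$ and any non-split-monomorphism out of $\Omega(M)$, so the naturality conditions are automatic; this is precisely \cite[Proposition 1.4]{Li2}, which your proposal is in effect trying to reprove from scratch and does not succeed in doing.

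A second, smaller gap: even granting the existence of the extra endomorphism, promoting it to degree $n$ requires not merely that $M_\lambda$ be $\Omega$-periodic, but that the chosen isomorphism $\alpha: M_\lambda \cong \Omega^{n+1}(M_\lambda)$ be compatible with the endomorphism in the sense that $\Omega(\alpha)\circ\Omega^{n+1}(\zeta) = (-1)^{n+1}\zeta\circ\alpha$; otherwise the sign condition $\Sigma\varphi = (-1)^n\varphi\Sigma$ in the definition of $Z^n$ fails. The paper devotes Lemma~\ref{periodonemodules} and Lemma~\ref{Omegainvariance} to verifying this compatibility explicitly. Your proposal treats $\Omega$-periodicity as if it alone sufficed, which it does not.
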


For odd $p$ we have a slightly weaker statement:

\begin{Theorem} \label{pgroupsstablecenter}
Let $p$ be an odd prime, $P$  a finite $p$-group of rank at least
$2$ and $k$ an algebraically closed field of characteristic~$p$. Evaluation at the
trivial $kP$-module
induces a surjective homomorphism of graded $k$-algebras
$Z^*(\modbar(kP)) \longrightarrow \hat H^*(P;k)$
whose kernel $\CI$ is a nilpotent homogeneous ideal which is infinite-dimensional in each
 odd degree. \end{Theorem}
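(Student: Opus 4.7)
The plan is to first construct a splitting of the evaluation map using the tensor structure of $\modbar(kP)$, then produce elements of $\CI$ from almost split triangles, and finally leverage infinite representation type to get infinite-dimensionality in each odd degree.

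For the surjectivity, I would use the diagonal tensor structure on $\modbar(kP)$: for each $\zeta \in \hat H^n(P;k) = \Hom_{\modbar(kP)}(k, \Omega^{-n}k)$, the assignment $\tilde\zeta_M = \zeta \tenk \Id_M : M \to \Omega^{-n} M$ (with the canonical identification $\Omega^{-n}k \tenk M \cong \Omega^{-n} M$ in $\modbar(kP)$) defines a natural transformation $\Id \to \Sigma^n$ satisfying $\Sigma\tilde\zeta = (-1)^n\tilde\zeta\Sigma$, and evaluation at $k$ recovers $\zeta$. For the nilpotency of $\CI$ I would invoke support variety theory together with the tensor-triangulated structure: each $\varphi \in \CI$ has proper cohomological support (since it vanishes at the tensor unit $k$), and a sufficiently long product of such elements must vanish uniformly. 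This step is the one I expect to require the most delicate bookkeeping.

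The heart of the argument is the following construction. For each $\Omega$-orbit $\mathcal{O}$ of indecomposable non-projective $kP$-modules, define a natural transformation $\varphi^{\mathcal{O}} : \Id \to \Sigma^{-1}$ by setting $\varphi^{\mathcal{O}}_N = \eta_N$ on each indecomposable summand of $N$ lying in $\mathcal{O}$ and by zero on summands outside $\mathcal{O}$, where $\eta_N \in \Hom_{\modbar(kP)}(N, \Omega N)$ is the canonical class of the almost split triangle $\Omega^2 N \to E \to N \to \Omega N$ at $N$. Naturality in a morphism $f: N_1 \to N_2$ of indecomposables uses two facts. First, since the AR translation $\tau = \Omega^2$ is a functor on $\modbar(kP)$ (a consequence of $kP$ being symmetric), the rightmost square of the induced morphism of almost split triangles gives $\Omega f \circ \eta_{N_1} = \eta_{N_2} \circ f$. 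Second, if $N_1 \not\cong N_2$ are indecomposable, then $f$ cannot be a split epimorphism, and hence $\eta_{N_2} \circ f = 0$ by the almost split property; both sides of the naturality square therefore vanish. The sign compatibility $\Sigma\varphi^{\mathcal{O}} = -\varphi^{\mathcal{O}}\Sigma$ reduces to the canonical identity $\Sigma\eta_N = -\eta_{\Sigma N}$ coming from the rotation axiom for distinguished triangles, together with the $\Omega$-closedness of $\mathcal{O}$. If $\mathcal{O}$ is not the orbit of $k$, then $\varphi^{\mathcal{O}}_k = 0$, so $\varphi^{\mathcal{O}} \in \CI^{-1}$.

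Since $P$ has rank at least $2$, $kP$ has infinite representation type, giving infinitely many $\Omega$-orbits of indecomposable non-projective modules distinct from the orbit of $k$. The associated transformations $\varphi^{\mathcal{O}}$ have pairwise disjoint supports on isomorphism classes of indecomposables and are thus linearly independent in $\CI^{-1}$. For an arbitrary odd degree $2j - 1$ one multiplies by a class $\zeta \in \hat H^{2j}(P;k)$ to obtain $\zeta \cdot \varphi^{\mathcal{O}} \in \CI^{2j-1}$, whose value at $M \in \mathcal{O}$ is identified with the cup product $\zeta \cup \eta_M \in \hat H^{2j-1}(P;\End_k(M))$. The main remaining obstacle is to ensure non-vanishing of $\zeta \cdot \varphi^{\mathcal{O}}$ for infinitely many orbits $\mathcal{O}$ in each fixed odd degree; I would handle this by choosing $\zeta$ non-nilpotent in $\hat H^*(P;k)$ and taking the modules $M$ in an infinite family of generic support (for instance a $\mathbb{P}^1(k)$-family of Carlson modules pulled back from an elementary abelian subquotient $C_p \times C_p \leq P$), so that $V_P(M) \not\subset V(\zeta)$ for infinitely many members of the family, forcing $\zeta \cup \eta_M \neq 0$ there and yielding the desired infinite-dimensionality in each odd degree.
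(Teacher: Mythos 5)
Your high-level strategy is the same as the paper's: produce elements of $\CI$ from almost vanishing morphisms attached to an infinite family of modules, and move them into each odd degree. Your handling of surjectivity (via the diagonal tensor structure) and nilpotency (via supports) differs from the paper, which simply invokes the Tate analogue of \cite[Proposition 1.3]{Li2} for both; your alternatives are plausible, though nilpotency via supports would need a clean argument that the cohomological support of an element of $\CI$ is a proper closed subvariety and that this forces uniform vanishing of long products. The degree shift by cup product with $\zeta\in\hat H^{2j}(P;k)$ is also a different mechanism from the paper, which instead reinterprets the \emph{same} almost vanishing morphism $\zeta_M : M\to\Omega(M)$ as a morphism $M\to\Sigma^{n-1}(M)$ using $\Sigma^n(M)\cong M$ for $n$ even; both mechanisms should work, and both ultimately require the modules to be periodic of period dividing the even shift (in your version: for $\zeta\cdot\varphi^{\mathcal O}$ to be nonzero at $M$, the morphism $\Omega\tilde\zeta(M):\Omega M\to\Omega^{1-2j}M$ must fail to annihilate the almost vanishing $\eta_M$, hence must be a split monomorphism between indecomposables, hence an isomorphism, forcing $\Omega^{2j}M\cong M$).

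There is, however, a genuine gap at the heart of your construction of $\varphi^{\mathcal O}$. When $\mathcal O$ is the orbit of a \emph{periodic} module $M$ of period $d$ — and as just noted, these are exactly the orbits you must use — the sign rule $\Sigma\varphi^{\mathcal O}=-\varphi^{\mathcal O}\Sigma$ forces $\varphi^{\mathcal O}(\Omega^j M)=(-1)^j\Omega^j\eta_M$, and naturality at a periodicity isomorphism $\alpha:\Omega^d M\cong M$ then demands
$$\eta_M\circ\alpha \;=\; (-1)^d\,\Omega(\alpha)\circ\Omega^d(\eta_M).$$
Both sides are almost vanishing morphisms $\Omega^d M\to\Omega M$, hence proportional, but the proportionality constant is \emph{not} automatically $(-1)^d$: it measures whether the algebra automorphism of $\Endbar_{kP}(M)$ given by $\varphi\mapsto\alpha^{-1}\circ\Omega^d(\varphi)\circ\alpha$ is inner, a question the paper explicitly singles out as open in general (the observation attributed to Benson in the introduction). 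Your appeal to ``the canonical identity $\Sigma\eta_N=-\eta_{\Sigma N}$ coming from the rotation axiom, together with the $\Omega$-closedness of $\mathcal O$'' handles the sign bookkeeping along the orbit but says nothing about this consistency when the orbit closes up. The paper closes this gap by choosing very specific period-$2$ modules $M=kP\,x^{p-1}$ with $x=\lambda(s-1)+\mu(t-1)$ for central $s,t$ of order $p$, and proving by an explicit projective-resolution computation (Lemma \ref{periodonemodules}(ii), propagated via Lemma \ref{Omegainvariance}) that the periodicity isomorphism can be taken to commute with every stable endomorphism of $M$. You would need an analogous statement for whichever family of periodic modules you use — generic Carlson modules are not known to have this property for free, and a $\mathbb P^1(k)$-family of them is exactly what the paper's Proposition \ref{isomod} together with Lemma \ref{periodonemodules} provides in a form where the check can actually be carried out.
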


It is easy to see that the canonical
map $Z^*(\modbar(kP))\rightarrow \hat H^*(P;k)$ is surjective
with nilpotent kernel $\CI$ (see Lemma \ref{tatestablecenter} below). The point of the
above
theorem is that this kernel tends to have infinite dimension in
each degree (if $p=2$) and at least in each odd degree if $p>2$.
Note though that for $p$ odd there is no known example with
$Z^0(\modbar(kP))$ having infinite dimension.
The proof shows more precisely that these dimensions have as lower
bound the cardinality of the field $k$. Technically,
the proofs of the above theorems are based on the fact that for  $A$ a
symmetric algebras, an almost split sequence ending in an indecomposable
non projective $A$-module $U$ determines an {\it almost vanishing
morphism} $\zeta_U : U \rightarrow \Omega(U)$ which in turn provides elements of degree 
$-1$ in the graded center of the stable category; see e.g. \cite[Proposition 1.4]{Li2} or
section \ref{background}  below. Using modules with appropriate periods, these can then
be ``shifted" to all other degrees if the underlying characteristic is $2$ and all other
odd degrees if the characteristic is odd. The elements of $Z^*(\modbar(A))$ obtained in
this way will be
called {\it almost vanishing}; see \ref{avdef}  for details.
For Klein four groups, almost split sequences
turn out to be the only way to obtain elements in the graded center
of its stable module category beyond Tate cohomology. This can be seen using the
classification of indecomposable modules over Klein four groups and leads to a slightly
more precise statement.

\begin{Theorem} \label{kleinfour}
Let $P$ be a Klein four group and let $k$ be an algebraically closed field
of characteristic $2$. Then the evaluation at the trivial $kP$-module
induces a surjective homomorphism of graded $k$-algebras
$Z^*(\modbar(kP)) \longrightarrow \hat H^*(P;k)$
whose kernel $\CI$ is a homogeneous ideal which is infinite-dimensional in each degree.
Moreover, we have $\CI^2 = \{0\}$ and all elements in $\CI$
are almost vanishing. \end{Theorem}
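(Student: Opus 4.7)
The plan is to combine Theorem~\ref{2groupsstablecenter} with the classification of indecomposable $kP$-modules for $P$ a Klein four group. Applying Theorem~\ref{2groupsstablecenter} with $p=2$ already provides the surjectivity of $Z^*(\modbar(kP)) \to \hat H^*(P;k)$, the nilpotency of the kernel $\CI$ and its infinite-dimensionality in each degree; what remains is to prove $\CI^2 = \{0\}$ and that every element of $\CI$ is almost vanishing. The strategy is to pin down the values of an arbitrary $\varphi \in \CI$ on each indecomposable.

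The non-projective indecomposable $kP$-modules fall into two classes: the Heller translates $\Omega^n(k)$ for $n \in \Z$, forming a single $\Omega$-orbit of odd-dimensional modules, and the even-dimensional band modules $M(n,\lambda)$ indexed by $n \geq 1$ and $\lambda \in \P^1(k)$, each of which is $\Omega$-periodic of period one. Given $\varphi \in \CI$ of degree $d$, the condition $\varphi_k = 0$ together with the relation $\Sigma\varphi = (-1)^d \varphi\Sigma$ forces $\varphi_{\Omega^n(k)} = 0$ for all $n \in \Z$, so $\varphi$ is determined by its values $\varphi_{M(n,\lambda)}$; because of period one, each of these lies in the local $k$-algebra $\Endbar(M(n,\lambda))$.

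The main technical step is to show that $\varphi_{M(n,\lambda)}$ lies in $k \cdot \zeta_{M(n,\lambda)} = \soc(\Endbar(M(n,\lambda)))$, where $\zeta_{M(n,\lambda)}$ is the almost vanishing endomorphism coming from the almost split sequence ending at $M(n,\lambda)$. First I would use that any projection $f : M(n,\lambda) \to k$ onto a direct summand of the top is non-zero in $\modbar(kP)$ (since no such map factors through a projective cover), so that naturality $f \circ \varphi_{M(n,\lambda)} = \varphi_k \circ f = 0$ forces $\varphi_{M(n,\lambda)}$ into the radical of $\Endbar(M(n,\lambda))$. To push the value all the way into the socle, I would exploit the tube structure of the AR-quiver: the modules $\{M(n,\lambda) : n \geq 1\}$ form a tube of rank one with irreducible morphisms $\alpha_n : M(n,\lambda) \to M(n{+}1,\lambda)$ and $\beta_n : M(n{+}1,\lambda) \to M(n,\lambda)$, and $\Endbar(M(n,\lambda))$ is a truncated polynomial algebra whose one-dimensional socle is spanned by $\zeta_{M(n,\lambda)}$. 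Naturality of $\varphi$ with respect to $\alpha_n$ and $\beta_n$, applied inductively from the mouth $n = 1$ of the tube outward, should force $\varphi_{M(n,\lambda)}$ to annihilate $\rad(\Endbar(M(n,\lambda)))$, hence to lie in the socle. The main obstacle is precisely this inductive step: it depends on a careful identification of $\Endbar(M(n,\lambda))$ and of the images of the $\alpha_n$ and $\beta_n$ in these algebras, and is where the full force of the classification of Klein four modules enters.

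Finally, once every $\varphi \in \CI$ is known to be almost vanishing, the identity $\CI^2 = \{0\}$ follows from a general composition property of almost vanishing morphisms: for $\varphi,\psi \in \CI$ of degrees $d,e$ one has $(\psi\varphi)_M = \psi_{\Omega^{-d}(M)} \circ \varphi_M$, and since $\varphi_M \in k \cdot \zeta_M$ is a non-isomorphism out of the indecomposable $M$, while $\psi_{\Omega^{-d}(M)}$, being almost vanishing, annihilates every non-isomorphism from an indecomposable into $\Omega^{-d}(M)$, the composition vanishes on every indecomposable $M$, so $\psi\varphi = 0$.
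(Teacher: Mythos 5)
Your proposal follows essentially the same strategy as the paper: use the Ba\v{s}ev/Heller--Reiner classification to split the non-projective indecomposables into the $\Omega$-orbit of $k$ (odd dimension) and the period-one band modules $M_n^\lambda$ (dimension $2n$), observe that any $\varphi\in\CI$ vanishes on the first family, and then run an induction along each rank-one tube to show that $\varphi$ evaluates at $M_n^\lambda$ to a scalar multiple of the almost vanishing endomorphism. Your reduction to the radical via the surjections $M_n^\lambda\to k$ (all of which are stably nonzero because the band modules are indecomposable, hence admit no surjection onto the projective $kP$) is a sound way to start, and in fact, taken over all $n$ such projections, it already pins a representative of $\varphi(M_n^\lambda)$ down to have image in $\soc(M_n^\lambda)$, matching the paper's implicit first step. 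Your argument that $\CI^2=0$ once all elements are almost vanishing is the same as Lemma~\ref{avideal}.

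The gap is exactly the one you flag: the inductive step along the tube is not carried out, and this is where essentially all the content of the paper's proof lies. The paper does it by fixing $\lambda=0$ via the $GL_2(k)$-action, choosing an explicit basis $\{a_i,b_i\}$ of $M_n$, producing the distinguished endomorphisms $\varphi_i^n$ and the explicit tube morphisms $\zeta_m^n$, $\xi_m^n$ (Lemmas~\ref{zeroendomorphisms}, \ref{morphisms}), and then computing $\underline\xi_{n-k+1}^n\circ\eta(M_n)\circ\underline\zeta_{n-k+1}^n = \alpha_k\underline\varphi_1^{n-k+1}$ to kill the coefficients one at a time (the final lemma of Section~5, together with Lemma~\ref{kV4AuslanderReiten} for the identification of the socle element and its $\Omega$-invariance). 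One specific claim in your sketch is incorrect and would trip up the induction as you frame it: $\Endbar_{kP}(M_n^\lambda)$ is \emph{not} a truncated polynomial algebra for $n\geq2$ --- already $\Endbar_{kP}(M_2)$ has radical generated by two elements (it is isomorphic to $kV_4$ itself), not by one. What survives, and what the argument actually needs, is the weaker fact that the two-sided socle of $\Endbar_{kP}(M_n^\lambda)$ is one-dimensional, spanned by the almost vanishing endomorphism $\underline\varphi_1^n$; establishing this, and verifying that naturality along the tube forces $\varphi(M_n^\lambda)$ into that one-dimensional space, is precisely the concrete work the paper's lemmas do.
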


This raises the question for which finite $p$-groups $P$ is
the graded center $Z^*(\modbar(kP))$ generated by Tate cohomology
and almost vanishing elements. Another interesting question underlying
some of the techical details below is the following. Given a periodic module
$U$ of period $n$ of a symmetric algebra $A$ over a field $k$, any
isomorphism $\alpha : U\cong \Omega^n(U)$ induces an algebra automorphism
of the stable endomorphism algebra $\Endbar_A(U)$ sending $\varphi$ to
$\alpha^{-1}\circ\Omega(\varphi)\circ\alpha$. When is this an inner
automorphism? Equivalently, when can $\alpha$ be chosen in such a way
that this automorphism is the identity? D. J. Benson \cite{Bencomm}
observed that the answer is positive if $\alpha$ is induced by an
element in Tate cohomology (or the Tate analogue of Hochschild cohomology)
because then $\alpha$ is the evaluation at $U$ of a natural transformation from the
identity functor on $\modbar(A)$ to the functor $\Omega^n$.

\section{Background material} \label{background}

Let $A$ be a finite-dimensional algebra over a field $k$. The stable module
category $\modbar(A)$ has as objects the finitely generated $A$-modules
and as morphisms, for any two finitely generated $A$-modules $U$, $V$,
the $k$-vector space $\Hombar_A(U,V) = \Hom_A(U,V)/\Hom_A^{pr}(U,V)$,
where $\Hom_A^{pr}(U,V)$ is the $k$-vector space of $A$-homomorphisms from
$U$ to $V$ which factor through a projective $A$-module, with composition
of morphisms induced by the usual composition of $A$-homomorphisms
in the category of finitely generated $A$-modules $\mod(A)$.
The Heller operator $\Omega_A$ sends an $A$-module $U$ to the kernel
of a projective cover $P_U\rightarrow U$ of $U$; this induces a
functor, still denoted $\Omega_A$, on $\modbar(A)$, which is unique up to unique
isomorphism of functors. If $A$ is self-injective then $\Omega_A$ is an equivalence,
and $\modbar(A)$ becomes a triangulated category with shift functor
$\Sigma_A = \Omega_A^{-1}$, sending an $A$-module to a cokernel of an
injective hull of $U$, with exact triangles induced by short exact
sequences in $\mod(A)$. See e.g. \cite{Hap} for details.
If $A$ is symmetric - that is, $A$ is
isomorphic, as $A$-$A$-bimodule to its $k$-dual $\Hom_k(A,k)$ -
then $A$ is in particular self-injective.
The literature on finite-dimensional algebras tends to privilege $\Omega_A$ from a
notational point of view, while the one on triangulated categories would rather use
$\Sigma_A$; we sometimes use both.
If $A$ is clear from the context and no confusion arises, we write $\Omega$ and $\Sigma$
instead of $\Omega_A$ and $\Sigma_A$, respectively.
If $B$ is a subalgebra of $A$ we have a canonical isomorphism
$\Omega_A^n(A\tenB V)\cong A\tenB \Omega_B^n(V)$ in $\modbar(A)$, where $V$ is a finitely
generated $B$-module and $n$ a positive integer.
If moreover $A$ is projective as $B$-module, we also have a canonical
isomorphism $\Omega_B^n(\Res^A_B(U))\cong\Res^A_B(\Omega_A^n(U))$ in $\modbar(B)$, where
$U$ is a finitely generated $A$-module, and the canonical adjunction isomorphism
$\Hom_A(A\tenB V,U)
\cong$ $\Hom_B(B,\Res^A_B(U))$ induces an isomorphism $\Hombar_A(A\tenB V,U)
\cong$ $\Hombar_B(V,\Res^A_B(U))$, where $U$ is an $A$-module and $V$ is a
$B$-module. The following well-known lemma states that the Heller translates commute
with the adjunction isomorphisms (we sketch a proof for the convenience of the reader).

\begin{Lemma}\label{OmegaAdjComm} Let $A$ be a finite-dimensional algebra over a field
$k$ and $B$ a subalgebra
such that $A$ is projective as $B$-module. Let $n$ be a positive integer. The
following diagram of canonical isomorphisms is commutative:
$$\xymatrix{
\Hombar_A(A\tenB\Omega_B^n(V),\Omega_A(U))\ar[r]_\simeq\ar[d]_\simeq
&\Hombar_B(\Omega_B^n(V),\Res^A_B(\Omega_A^n(U)))\ar[d]^\simeq\\
\Hombar_A(\Omega_A^n(A\otimes_B V),\Omega_A^n(U))
&\Hombar_B(\Omega_B^n(V),\Omega_B^n(\Res^A_B(U)))\\
\Hombar_A(A\otimes_B V,U)\ar[r]^\simeq\ar[u]^{\Omega_A^n}_{\simeq}
&\Hombar_B(V,\Res^A_B(U))\ar[u]_{\Omega_B^n}^{\simeq}
}$$
\end{Lemma}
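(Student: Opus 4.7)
The plan is to reduce the assertion to the case $n=1$ by induction and then to verify the $n=1$ case by a direct construction using projective covers together with the naturality of the adjunction.

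For the induction, with $n\geq 2$, the diagram for exponent $n$ decomposes as the diagram for $\Omega^{n-1}$ applied to the pair $(V,U)$ stacked with the $n=1$ diagram applied to the pair $(\Omega_B^{n-1}V,\Omega_A^{n-1}U)$: the top row of the first coincides with the bottom row of the second, and the vertical identifications compose under $\Omega$. If both sub-diagrams commute, the whole commutes; so it suffices to treat the case $n=1$.

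For $n=1$, I would fix a projective cover $\pi\colon P\twoheadrightarrow V$ in $\mod(B)$ with kernel $Q\hookrightarrow P$, so that $Q$ represents $\Omega_B(V)$. Since $A$ is flat as a $B$-module, applying $A\tenB-$ yields a short exact sequence
$$0\to A\tenB Q\to A\tenB P\to A\tenB V\to 0$$
in $\mod(A)$ with $A\tenB P$ projective over $A$; hence $A\tenB Q$ represents $\Omega_A(A\tenB V)$, realizing the canonical isomorphism on the left column of the diagram. Likewise, I would fix a projective cover $\rho\colon P'\twoheadrightarrow U$ in $\mod(A)$ with kernel $Q'$ representing $\Omega_A(U)$. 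Restriction to $B$ preserves projectivity since $A$ is $B$-projective, so $\Res^A_B(Q')$ represents $\Omega_B(\Res^A_B(U))$, realizing the canonical isomorphism on the right column.

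Given $f\colon A\tenB V\to U$, I would lift $f$ through the chosen projective covers to a map $\tilde f\colon A\tenB P\to P'$ and restrict to kernels, obtaining a representative $\hat f\colon A\tenB Q\to Q'$ of $\Omega_A(f)$. Tracing around the diagram via the left column (applying $\Omega_A$) and then the top row (adjunction), one arrives at the adjoint $\widehat{\bar f}\in\Hombar_B(Q,\Res^A_B Q')$ of $\hat f$, given by $q\mapsto \hat f(1\ten q)$. Tracing around via the bottom row (adjunction) and then the right column (applying $\Omega_B$), one first forms the adjoint $\bar f\colon V\to \Res^A_B U$, $\bar f(v)=f(1\ten v)$, then lifts $\bar f$ to a map $P\to \Res^A_B P'$ across the corresponding $B$-projective covers, and restricts to kernels. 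The crucial observation is that the assignment $p\mapsto\tilde f(1\ten p)$ is itself a valid choice of lift of $\bar f$; since any two such lifts agree modulo maps factoring through projectives, restricting this particular lift to $Q$ gives exactly $q\mapsto \tilde f(1\ten q)=\hat f(1\ten q)=\widehat{\bar f}(q)$, so the two paths agree in $\modbar(B)$. The main obstacle is bookkeeping: one has to verify that the lifts chosen on the $A$-side and the $B$-side can be taken adjoint to each other in $\mod$, after which compatibility on the kernels follows from the naturality in both variables of the adjunction $\Hom_A(A\tenB -,-)\cong \Hom_B(-,\Res^A_B -)$.
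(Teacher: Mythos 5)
Your proposal is correct and takes essentially the same route as the paper: reduce to $n=1$, then realize both paths around the square by explicit lifts through projective covers and observe that the adjoint of the $A$-side lift is itself a valid lift on the $B$-side, so their restrictions to the kernels agree modulo maps factoring through projectives. The only cosmetic difference is that you work directly with the non-minimal projectives $A\tenB P$ and $\Res^A_B P'$ as representatives, whereas the paper inserts middle rows with the genuine minimal projective covers $P_{A\tenB V}$ and $P_{\Res^A_B(U)}$ together with comparison maps $\delta$, $\psi$; your version avoids that bookkeeping but encodes the same commutative diagrams.
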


\begin{proof}
We sketch the argument in the case $n=1$; the general case follows either
by induction, or directly with short exact sequences in the two diagrams below
replaced by exact sequences with $n+2$ terms of which all but possibly the first and last
are projective.
The Heller translates $\Omega_A$ and $\Omega_B$ are defined by the following two
commutative diagrams with exact rows, where $P_X$ denotes a projective cover of the
module $X$:
$$\xymatrix{
0\ar[r] &A\tenB\Omega_B(V)\ar[r]\ar[d]_\simeq &A\tenB P_V\ar[r]_\nu\ar[d]_\delta &A\tenB
V\ar[r]\ar@{=}[d] &0\\
0\ar[r] &\Omega_A(A\tenB V))\ar[r]\ar[d]_{\Omega_\alpha} &P_{A\otimes_B
V}\ar[r]\ar[d]_\gamma &A\otimes_B V\ar[r]\ar[d]_\alpha &0\\
0\ar[r] &\Omega_A(U)\ar[r] &P_U\ar[r]_\mu &U\ar[r] &0
}$$

$$\xymatrix{
0\ar[r] &\Omega_B(V)\ar[r]\ar[d]_{\Omega_\beta} &P_V\ar[r]_\rho\ar[d]_\phi &
V\ar[r]\ar[d]_\beta &0\\
0\ar[r] &\Omega_B(\Res_B^A(U))\ar[r]\ar[d]_\simeq &P_{\Res_B^A(U)}\ar[r]\ar[d]_\psi
&\Res_B^A(U)\ar[r]\ar@{=}[d] &0\\
0\ar[r] &\Res_B^A(\Omega_A U)\ar[r] &\Res_B^A(P_U)\ar[r]_\sigma &\Res_B^A(U)\ar[r]&0
}$$
Here $\mu$, $\rho$ are chosen surjective morphisms and
$\sigma = \Res^A_B(\mu)$, $\nu = \Id_A\ten \rho$.
We have that $\mu\circ\gamma\circ\delta=\alpha\circ\nu$. Since $\alpha$, $\beta$
correspond to each other through the adjunction
ismorphism, the two diagrams imply that the images of
$\Omega_\alpha$ and $\Omega_\beta$ correspond to each other through
the adjunction $\Hombar_B(\Omega_B(V),\Res^A_B(\Omega_A(U)))
\cong$ $\Hombar_A(A\tenB \Omega_B(V), \Omega_A(U))$. \end{proof}

A morphism $\alpha : U\rightarrow V$ in $\modbar(A)$ is called
{\it almost vanishing} if $\alpha\neq 0$ and if for any morphism
$\varphi : X \rightarrow U$ in $\modbar(A)$ which is not a split epimorphism we have
$\alpha\circ\varphi = 0$. Since endomorphism
algebras of indecomposable $A$-modules are (not necessarily split) local, it follows from
\cite[I.4.1]{Hap} that this condition is equivalent to its dual; that is, for any
morphism $\psi : V \rightarrow Y$ in $\modbar(A)$ which is not a
split monomorphism we have $\psi\circ\alpha = 0$. In order to follow almost vanishing
morphisms through the standard
adjunction isomorphisms we collect a few elementary statements
on split epimorphisms. In statements and proofs
where we consider both $A$-homomorphisms and their classes in the stable
category we adopt the notational convention that if $\varphi : U\to V$
is a homomorphism of $A$-modules then $\underline\varphi$ denotes the
class of $\varphi$ in $\Hombar_A(U,V)$. As in any triangulated category,
epimorphisms in $\modbar(A)$ are split epimorphisms, and those are
essentially induced by split epimorphisms in $\mod(A)$:

\begin{Lemma} \label{stablesplitEpi}
Let $A$ be a finite-dimensional $k$-algebra and $\varphi :  U\rightarrow
V$ an $A$-homomorphism. Suppose that $V$ is indecomposable non projective.
Then $\varphi$ is a split epimorphism in $\mod(A)$ if and only if
its class $\underline\varphi : U\rightarrow V$ is a split epimorphism in $\modbar(A)$.
\end{Lemma}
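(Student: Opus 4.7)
The ``only if'' direction is immediate: a right inverse $\psi$ of $\varphi$ in $\mod(A)$ passes to a right inverse $\underline\psi$ of $\underline\varphi$ in $\modbar(A)$, so I would only need to spell out the ``if'' direction.

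For the ``if'' direction, suppose $\underline\varphi$ is a split epimorphism in $\modbar(A)$, so there is $\psi:V\to U$ in $\mod(A)$ such that $\underline{\varphi\circ\psi}=\underline{\Id_V}$. Unwinding the definition of $\Hombar$, this means that $\varphi\circ\psi-\Id_V$ factors through a projective $A$-module $P$; write it as $b\circ a$ with $a:V\to P$ and $b:P\to V$. Hence in $\End_A(V)$ we have the identity
\[
\Id_V \;=\; \varphi\circ\psi \;-\; b\circ a.
\]
The plan is then to show that $\varphi\circ\psi$ is an automorphism of $V$, from which $\varphi$ is a split epimorphism in $\mod(A)$ by postcomposing $\psi$ with the inverse.

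The decisive point is that $b\circ a\in\End_A(V)$ is not an automorphism of $V$: if it were, then its inverse composed with $b$ would give a retraction $V\to P$ of the monomorphism-like data $a:V\to P$, exhibiting $V$ as a direct summand of $P$, hence projective, contrary to hypothesis. Since $V$ is indecomposable, $\End_A(V)$ is a local ring; thus every non-invertible element lies in its Jacobson radical, and $\Id_V-b\circ a$ therefore becomes a unit in $\End_A(V)$. Since this unit equals $\varphi\circ\psi$, composing $\psi$ with $(\varphi\circ\psi)^{-1}$ on the right yields a right inverse of $\varphi$ in $\mod(A)$, as required.

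The only subtlety, and the sole ``hard'' step in what is otherwise a short argument, is the observation that $b\circ a$ cannot be an isomorphism: everything else is standard use of the definition of $\Hombar_A$ and of the local-ring property of indecomposable endomorphism rings (which also accounts for the fact that $\End_A(V)$ need not be split, so one must work with the Jacobson radical rather than with a maximal ideal in a commutative sense).
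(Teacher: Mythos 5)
Your argument is correct and follows essentially the same route as the paper's proof: lift a stable section to an honest homomorphism, observe that the discrepancy from $\Id_V$ factors through a projective and hence lies in the Jacobson radical of the local ring $\End_A(V)$ (your explicit check that $b\circ a$ cannot be an automorphism is exactly what makes this element a non-unit), and conclude that $\varphi\circ\psi$ is an automorphism. There is only a harmless sign slip at the end: from $\varphi\circ\psi-\Id_V=b\circ a$ the unit you produce is $\Id_V+b\circ a=\varphi\circ\psi$ rather than $\Id_V-b\circ a$, but since $-b\circ a$ lies in the radical as well the conclusion is unaffected.
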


\begin{proof}
Suppose $\underline\varphi$ is a split epimorphism in $\modbar(A)$.
Let $\sigma:V\rightarrow U$ be an $A$-homomorphism such that
$\underline\varphi\circ\underline\sigma = \underline\Id_V$.
Thus $\varphi\circ\sigma-\Id_V$ factors through a projective
module. Since $V$ is indecomposable non projective this implies that
$\varphi\circ\sigma-\Id_V\in J(\End_A(V))$, hence $\rho = \varphi\circ\sigma$ is an
automorphism of $V$ and $\sigma\circ\rho^{-1}$ a section for $\varphi$. The converse is
trivial.
\end{proof}

\begin{Lemma}\label{splitEpi}
Let $A$ be a $k$-algebra and $B$ a subalgebra of $A$. Suppose that $B$ has a complement
in $A$ as a $B$-$B$-bimodule. Let $\varphi:V\to W$ be a homomorphism of left $B$-modules
and set
$\psi = \Id_A\ten\varphi : A\tenB V\rightarrow A\tenB W$.
Then $\psi$ is a split epimorphism if and only of $\varphi$ is a split epimorphism.
\end{Lemma}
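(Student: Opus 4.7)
The plan is to treat the two directions separately, with the forward direction being essentially formal and the reverse direction relying on the bimodule decomposition of $A$. If $\sigma : W \to V$ is a $B$-linear section of $\varphi$, then $\Id_A \ten \sigma : A \tenB W \to A \tenB V$ is an $A$-linear section of $\psi$ by functoriality of $A \tenB -$, so that direction is immediate.

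For the converse, suppose $\tau : A \tenB W \to A \tenB V$ is an $A$-linear (hence $B$-linear) section of $\psi$. I would fix a $B$-$B$-bimodule decomposition $A = B \oplus C$ and let $\pi : A \to B$ be the associated projection. For any left $B$-module $V$, the plan is to construct a left $B$-linear retraction $\pi_V : A \tenB V \to V$ of the canonical inclusion $\iota_V : V \to A \tenB V$, $v \mapsto 1 \ten v$, by the formula $\pi_V(a \ten v) = \pi(a) v$. Well-definedness of $\pi_V$ on the tensor product over $B$ uses that $\pi$ is right $B$-linear (because $C$ is a right $B$-submodule of $A$), while left $B$-linearity of $\pi_V$ uses that $\pi$ is left $B$-linear (because $C$ is a left $B$-submodule). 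Naturality of this construction in $V$ gives the identity $\pi_W \circ \psi = \varphi \circ \pi_V$, and clearly $\pi_W \circ \iota_W = \Id_W$.

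With these pieces in place, I would set $\sigma := \pi_V \circ \tau \circ \iota_W : W \to V$ and conclude by the one-line computation
$$\varphi \circ \sigma \;=\; \varphi \circ \pi_V \circ \tau \circ \iota_W \;=\; \pi_W \circ \psi \circ \tau \circ \iota_W \;=\; \pi_W \circ \iota_W \;=\; \Id_W,$$
so that $\sigma$ is a $B$-linear section of $\varphi$. There is no serious obstacle in this argument; the only point requiring care is the well-definedness and $B$-bilinearity of $\pi_V$, and this is precisely where the bimodule hypothesis on the complement $C$ enters in an essential way. Without $C$ being a sub-bimodule, the canonical inclusion $\iota_V$ would not admit a natural left $B$-linear retraction, and the splitting transfer would collapse.
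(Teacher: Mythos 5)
Your proof is correct and follows essentially the same route as the paper: both use the projection $A \to B$ with kernel $C$ to build $B$-linear retractions of the unit maps $V \to A\tenB V$ and $W \to A\tenB W$, observe the naturality square $\pi_W \circ \psi = \varphi \circ \pi_V$, and then define the section of $\varphi$ as $\pi_V \circ \tau \circ \iota_W$ (the paper writes this as $\rho(w) = \pi(\sigma(1\ten w))$). Your write-up is slightly more explicit about why $\pi_V$ is well-defined and left $B$-linear, which is a welcome clarification but not a different argument.
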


\begin{proof}
Suppose that $\psi$ is a split epimorphism; that is, there is
an $A$-homomorphism $\sigma : A\tenB W\rightarrow A\tenB V$ satisfying $\psi\circ\sigma =
\Id_{A\tenB W}$. Let $C$ be a complement of $B$ in $A$ as a $B$-$B$-bimodule; that is,
$A = B\oplus C$ as a $B$-$B$-bimodule.
Denote by $\pi : A\tenB V\rightarrow V$ and $\tau : A\tenB W\rightarrow W$
the canonical projections with kernel $C\tenB V$ and $C\tenB W$,
respectively, induced by the projection $A\rightarrow B$ with kernel $C$. Explicitly, for
$v\in V$ and $a\in A$
we have $\pi(a\ten v) = a\ten v$ if $a\in B$ and $\pi(a\ten v) = 0$ if
$a\in C$; similarly for $\tau$. Since $\psi = \Id_A\ten \varphi$
we have $\varphi\circ\pi = \tau\circ\psi$. Define $\rho : W\rightarrow V$
by $\rho(w) = \pi(\sigma(1\ten w))$, for any $w\in W$. Then
$\varphi(\rho(w)) =$ $\varphi(\pi(\sigma(1\ten w))) = $ $\tau(\psi(\sigma(1\ten w))) =$ $
\tau(1\ten w) = w$, hence $\varphi$ is
a split epimorphism with section $\rho$. The converse is trivial.
\end{proof}

\begin{Lemma} \label{Omegainvariance}
Let $A$ be a finite-dimensional algebra over an algebraically closed
field $k$ and $B$ a subalgebra such that $A$ is projective as left and right $B$-module
and such that $B$ has
a complement in $A$ as a $B$-$B$-bimodule. Let $V$ be a finitely
generated indecomposable non projective $B$-module such that $U=A\tenB V$ is
indecomposable. Then $U$ is non projective.
Let $\zeta_V : V\rightarrow \Omega_B(V)$ and $\zeta_U : U\rightarrow \Omega_A(U)$ be
almost vanishing homomorphisms
in $\modbar(B)$ and $\modbar(A)$, respectively.
Suppose that there is an isomorphism $\beta : \Omega_B^n(V)\cong V$ in $\modbar(B)$ such
that $\Omega_B(\beta)\circ\Omega_B^n(\zeta_V)=(-1)^n\zeta_V\circ\beta$.
Then $\Omega_A^n(U)\cong U$ and for any isomorphism
$\alpha : \Omega_A^n(U)\cong U$ in $\modbar(A)$ we have
the $\Omega_A(\alpha)\circ\Omega_A^n(\zeta_U)=(-1)^n\zeta_U\circ\alpha$.
\end{Lemma}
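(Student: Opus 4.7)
The plan is to proceed in four steps.

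First, non-projectivity of $U$ follows from the $B$-$B$-bimodule decomposition $A=B\oplus C$, which gives $\Res^A_B(U)\cong V\oplus(C\tenB V)$ as $B$-modules; if $U$ were projective then $\Res^A_B(U)$ would be projective over $B$ (since $A$ is projective as a $B$-module), forcing the summand $V$ to be projective and contradicting the hypothesis. Second, composing the canonical isomorphism $\Omega_A^n(A\tenB V)\cong A\tenB\Omega_B^n(V)$ in $\modbar(A)$ with $\Id_A\ten\beta:A\tenB\Omega_B^n(V)\to A\tenB V=U$ produces an explicit isomorphism $\alpha_0:\Omega_A^n(U)\cong U$, which will serve as a reference.

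The third step reduces the formula for an arbitrary isomorphism $\alpha$ to the case of $\alpha_0$. Writing $\alpha=\gamma\circ\alpha_0$ for the automorphism $\gamma=\alpha\circ\alpha_0^{-1}$ of $U$ in $\modbar(A)$, the algebra $\Endbar_A(U)$ is local with residue field $k$ (since $U$ is indecomposable non-projective and $k$ is algebraically closed), so $\gamma=\lambda\Id_U+r$ with $\lambda\in k^*$ and $r$ in the Jacobson radical. A non-unit endomorphism of an indecomposable object in $\modbar(A)$ is neither a split epimorphism nor a split monomorphism, so almost vanishing of $\zeta_U$ together with its dual formulation (applied to $\Omega_A(r)$, which is also a non-unit endomorphism since $\Omega_A$ is an auto-equivalence) yields $\zeta_U\circ r=0$ and $\Omega_A(r)\circ\zeta_U=0$; a direct expansion using these identities deduces the formula for $\alpha$ from the formula for $\alpha_0$.

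The fourth step, which contains the main difficulty, verifies the formula for $\alpha_0$. I would transfer via the adjunction $\Hombar_A(A\tenB W,X)\cong\Hombar_B(W,\Res X)$, which by Lemma~\ref{OmegaAdjComm} is compatible with Heller translates. Under the identification $\Res\Omega_A(U)\cong\Omega_B(V)\oplus\Omega_B(C\tenB V)$, both sides of the desired equation correspond to morphisms in $\Hombar_B(\Omega_B^n(V),\Omega_B(V)\oplus\Omega_B(C\tenB V))$. Almost vanishing of $\zeta_U$ combined with Lemma~\ref{splitEpi} (which ensures that induction preserves non-split-epimorphisms) forces the adjoint $\zeta_U^\flat:V\to\Res\Omega_A(U)$ to annihilate every non-split epimorphism into the indecomposable $V$, so it factors as $\zeta_U^\flat=\lambda\circ\zeta_V$ for some $\lambda:\Omega_B(V)\to\Res\Omega_A(U)$. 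Substituting this factorization and applying the hypothesis $\Omega_B(\beta)\circ\Omega_B^n(\zeta_V)=(-1)^n\zeta_V\circ\beta$ reduces the identity to showing that an error term $\delta=\Omega_B(\Res\alpha_0)\circ\Omega_B^n(\lambda)-\lambda\circ\Omega_B(\beta)$ satisfies $\delta\circ\Omega_B^n(\zeta_V)=0$. This is the main obstacle; I would dispatch it componentwise along the direct sum decomposition of $\Res\Omega_A(U)$, using locality of $\Endbar_B(\Omega_B(V))$ to write the first component of $\lambda$ as a scalar plus a radical element (whose composition with $\zeta_V$ vanishes by dual almost vanishing), and using that $\Omega_B^n(\zeta_V)$ is itself almost vanishing to kill the second component after composition.
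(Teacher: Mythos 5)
Your proposal follows the same overall strategy as the paper's proof: deduce non-projectivity of $U$ from the bimodule splitting $A=B\oplus C$, reduce to a single reference isomorphism $\alpha_0$ using locality of $\Endbar_A(U)$, and transfer the computation to the $B$-side via the adjunction and Lemma~\ref{OmegaAdjComm}, using almost-vanishing to control the components of the adjoint of $\zeta_U$. Your steps one through three are correct and in fact more explicit than the paper, which compresses the reduction to $\alpha_0$ into the opening two sentences of its proof; your factorization $\zeta_U^\flat=\lambda\circ\zeta_V$ (valid because $\zeta_U^\flat$ annihilates the right almost split map into $V$, hence factors through the cone $\zeta_V$) is an equivalent repackaging of the paper's observation that every component of the adjoint onto an indecomposable summand is zero or a scalar multiple of $\zeta_V$. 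The one soft spot is the final verification $\delta\circ\Omega_B^n(\zeta_V)=0$, which is exactly the paper's ``$\Omega_B^n(\eta)=(-1)^n\eta$, modulo appropriate identifications'': your treatment of the first component (writing $\lambda_1$ as a scalar plus a radical element, so that $\delta_1$ is radical and hence not a split monomorphism) is sound, but the claim that almost-vanishing of $\Omega_B^n(\zeta_V)$ ``kills the second component'' would require knowing that $\delta_2$ is not a split monomorphism, which does not follow from almost-vanishing alone. That said, the paper is equally terse at this point, and the underlying idea — that the scalar relating $\Omega_A(\alpha_0)\circ\Omega_A^n(\zeta_U)$ to $\zeta_U\circ\alpha_0$ can be read off from the component of $\eta$ on the canonical $\Omega_B(V)$-summand — is the right one.
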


\begin{proof}
We first point out where we use the hypothesis on $k$ being algebraically closed.
If we can find {\it some} isomorphism
$\alpha : \Omega_A^n(U)\cong U$ in $\modbar(A)$ such that
$\Omega_A(\alpha)\circ\Omega_A^n(\zeta_U)=(-1)^n\zeta_U\circ\alpha$ then
this holds for {\it any} isomorphism because $\End_A(U)$ is split local.
Moreover, $\zeta_U$ is unique up to a nonzero scalar.
Similarly for $V$ and $\zeta_V$.
By the assumptions, $A = B\oplus C$ for some $B$-$B$-submodule
$C$ of $A$ which is finitely generated projective as left and right
$B$-module. Thus $\Res^A_B$ sends any projective $A$-module to a
projective $B$-module, and $V$ is a direct summand of $\Res^A_B(U)$.
Since $V$ is non projective this implies that $U$ is non projective.
By adjunction we have
$$\Hombar_{A}(U,U) \cong \Hombar_{B}(V,\Res^A_B(A\tenB V))$$ We prove that the image
$\eta$ of $\zeta_U$ under this adjunction is a morphism
which, when composed with any projection of $\Res^A_B(A\tenB V)$ onto an
indecomposable direct summand is either zero or almost vanishing in $\modbar(B)$. This is
equivalent to showing that $\eta$ precomposed with any morphism
ending at $V$ which is not a split epimorphism in $\modbar(B)$ yields zero.
Let $X$ be a $B$-module and $\varphi\in\Hombar_B(X,V)$ such that the image
$\underline\varphi$ in $\Hombar_B(X,V)$ is not a split epimorphism.
Then by Lemma \ref{stablesplitEpi} and Lemma \ref{splitEpi} the morphism $\psi=\Id_A\ten
\varphi$ in $\Hombar_A(A\tenB X,U)$ is not a split epimorphism and hence
$\zeta_M\circ\psi$ is zero in $\Hombar_{A}(A\tenB X,M)$. Applying the adjunction  to
$\zeta_M\circ\psi$ we get that $\eta\circ\varphi$ is zero in
$\Hombar_{B}(X,\Res^A_B(A\tenB V)))$. Thus all components of $\eta$
to the indecomposable direct summands of $\Res^A_B(A\tenB V))$ are
either zero or a scalar multiple of $\zeta_V$.
The hypothesis on $\zeta_V$ implies that $\Omega^n_{B}(\eta)=(-1)^n\eta$, modulo
appropriate identifications.
It follows from Lemma \ref{OmegaAdjComm} that $\Omega^n(\zeta_U)=(-1)^n\zeta_U$,
again modulo appropriate identifications, which proves the result.
\end{proof}

The existence of
an almost vanishing morphism $\alpha : U\rightarrow V$ in $\modbar(A)$
forces that $U$, $V$ are indecomposable non projective $A$-modules,
and that the exact triangle in $\modbar(A)$ of the form
$$\xymatrix{\Omega(V) \ar[r] & E \ar[r] & U \ar[r]^{\alpha} & V }$$ is {\it almost split}
in the sense of \cite[I.4.1.]{Hap}, hence induced
by an almost split exact sequence of $A$-modules of the form
$$\xymatrix{0 \ar[r] & \Omega(V) \ar[r] & E \ar[r] & U \ar[r] & 0 }$$
which in turn forces that $V\cong \Omega(U)$
as $A$ is symmetric (cf. \cite[4.12.8]{Ben}). This shows that if there is an almost
vanishing
morphism $U \rightarrow \Sigma^n(U)$ in $\modbar(A)$ for some integer $n$
then $\Sigma^n(U)\cong \Sigma^{-1}(U)$, and hence either $n = -1$ or $U$ is periodic of
period dividing $n+1$. As mentioned earlier, almost vanishing
morphisms define elements in the graded center. It is convenient to extend the
terminology of almost vanishing morphisms to elements in the graded center:

\begin{Definition} \label{avdef}
Let $A$ be a symmetric algebra over a field $k$.
For any integer $n$ we say that an element
$\varphi\in Z^n(\modbar(A))$ is {\it almost vanishing} if for any
indecomposable non projective $A$-module $U$ either $\varphi(U) = 0$
or $\varphi(U) : U \rightarrow \Sigma^n(U)$ is almost vanishing.
An element in $Z^*(\modbar(A))$ is called {\it almost vanishing}
if all of its homogeneous components are almost vanishing.
We denote by $\CI_A$ the set of all almost vanishing elements
in $Z^*(\modbar(A))$.
\end{Definition}

This definition makes sense for arbitrary triangulated categories.
Note that the zero element of $Z^*(\modbar(A))$ is
contained in $\CI_A$; this slight departure from the definition of
almost vanishing morphisms has the following obvious advantage:

\begin{Lemma} \label{avideal}
Let $A$ be a symmetric algebra over a field $k$. Then $\CI_A$ is an ideal in
$Z^*(\modbar(A))$ whose square is zero. Moreover, $\CI_A$ annihilates
every ideal consisting of nilpotent elements in $Z^*(\modbar(A))$.
\end{Lemma}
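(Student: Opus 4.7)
The plan is to verify in order that $\CI_A$ is a graded $k$-subspace of $Z^*(\modbar(A))$, that it is a two-sided ideal, and that it annihilates every homogeneous ideal of nilpotent elements. The square-zero assertion will fall out by specializing the annihilation to $J=\CI_A$ once each element of $\CI_A$ is verified to be nilpotent.

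For the subspace and ideal claims, the key observation is that at each indecomposable non-projective $U$ and each integer $n$, almost vanishing morphisms $U\to\Sigma^n U$ are all scalar multiples of a single one (the connecting map of the almost split triangle ending at $U$; see the discussion preceding Definition \ref{avdef}). Hence the set of morphisms in $\Hombar_A(U,\Sigma^n U)$ that are either zero or almost vanishing is a $k$-subspace, yielding the subspace claim. For the ideal claim, given $\varphi\in Z^n(\modbar(A))$ and $\zeta\in\CI_A^m$, evaluate $(\varphi\zeta)(U)=\varphi(\Sigma^m U)\circ\zeta(U)$ at indecomposable non-projective $U$: if $\zeta(U)=0$ this is zero, and otherwise $\zeta(U)$ is almost vanishing, so for every $\chi:X\to U$ not a split epimorphism we have $\zeta(U)\circ\chi=0$ and hence $(\varphi\zeta)(U)\circ\chi=0$, so $(\varphi\zeta)(U)$ is zero or almost vanishing. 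Graded commutativity of $Z^*(\modbar(A))$ handles the symmetric case $\zeta\varphi\in\CI_A$.

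The main step is the annihilation. Let $J$ be a graded ideal of $Z^*(\modbar(A))$ consisting of nilpotent elements, $\zeta\in\CI_A$, and $\psi\in J$ of degree $r$. At indecomposable non-projective $U$, $(\zeta\psi)(U)=\zeta(\Sigma^r U)\circ\psi(U)$, and we may assume $\zeta(\Sigma^r U)$ is almost vanishing. By the definition of almost vanishing, the composite is zero provided $\psi(U):U\to\Sigma^r U$ is not a split epimorphism --- equivalently, since both modules are indecomposable non-projective, provided $\psi(U)$ is not an isomorphism in $\modbar(A)$. The essential step is to show $\psi(U)$ cannot be an isomorphism. By naturality of $\psi$ together with $\Sigma\psi=(-1)^r\psi\Sigma$, iteration expresses $\psi^N(U)$, up to sign, as a composition of $N$ shifted copies of $\psi(U)$; were $\psi(U)$ an isomorphism, each shifted copy would be too, so $\psi^N(U)$ would be an isomorphism, in particular non-zero for all $N$, contradicting $\psi^N=0$ for $N$ large.

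The main obstacle is the deduction of $\CI_A^2=0$ from the annihilation: one applies the annihilation with $J=\CI_A$, requiring each $\zeta\in\CI_A$ to be nilpotent. Running the previous paragraph's argument with $\psi=\zeta$ reduces this to ruling out that $\zeta(U):U\to\Sigma^{|\zeta|}U$ is an isomorphism for some indecomposable non-projective $U$. When $U\not\cong\Sigma^{|\zeta|}U$ in $\modbar(A)$ this is immediate, and in the periodic case one uses that an almost vanishing morphism lies in the socle of $\Hombar_A(U,\Sigma^{|\zeta|}U)$ as an $\Endbar_A$-bimodule, while an isomorphism would generate the whole Hom-space under the $\Endbar_A$-actions --- precluded by the existence of non-socle elements arising from the middle term of the almost split triangle ending at $\Sigma^{|\zeta|}U$. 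Hence $\zeta^2=0$, $\CI_A$ consists of nilpotents, and the annihilation yields $\CI_A^2=0$.
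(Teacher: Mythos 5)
Your proof takes the same route as the paper's, which compresses all of this into four sentences; your write-up is considerably more detailed. The subspace and ideal verifications are correct and are left implicit in the paper (note, though, that ``all scalar multiples of a single one'' is not needed for the subspace claim and is not literally true when $\Endbar_A(U)$ is not split local: it suffices that the condition $\alpha\circ\chi=0$ for all non-split-epimorphisms $\chi$ is $k$-linear in $\alpha$). The crux of the annihilation step --- that a nilpotent $\psi\in Z^n(\modbar(A))$ evaluates at every indecomposable non-projective $U$ to a non-isomorphism, since iterating the naturality square along $\psi(U)$ would otherwise make $\psi^N(U)$ an isomorphism for every $N$ --- is exactly the assertion the paper makes without justification, and you supply it correctly (naturality alone does the job; the sign-commutation with $\Sigma$ is not needed for this particular point).

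The one shaky spot is your deduction of $\CI_A^2=0$. Your socle argument that $\zeta(U)$ cannot be an isomorphism is not airtight: the claimed ``non-socle elements arising from the middle term'' are not exhibited, and there are none when $\Endbar_A(U)$ is a division ring. A cleaner route, and the one the paper implicitly takes, avoids proving nilpotence of $\zeta$ altogether: for $\zeta,\zeta'\in\CI_A$ homogeneous of degrees $m,m'$, evaluate $(\zeta\zeta')(U)=\pm\,\zeta(\Sigma^{m'}U)\circ\zeta'(U)$; since $\zeta'(U)$ is either zero or almost vanishing, hence not an isomorphism and so not a split epimorphism (both endpoints being indecomposable), the almost-vanishing property of $\zeta(\Sigma^{m'}U)$ kills the composite directly. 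Observe however that both this argument and yours rest on the claim that an almost vanishing morphism between indecomposables is never an isomorphism, which is equivalent to the middle term of the associated almost split triangle being nonzero in $\modbar(A)$. That can fail: for $A=k[x]/(x^2)$ the unique almost split sequence is $0\to S\to A\to S\to 0$ with projective middle term, the connecting map is invertible in $\modbar(A)$, the identity transformation lies in $\CI_A$, and $\CI_A^2\neq 0$. The paper and your proof both tacitly exclude this degenerate situation; it is worth being explicit that ``almost vanishing'' is meant to exclude isomorphisms.
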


\begin{proof} If $\varphi\in Z^n(\modbar(A))$ is nilpotent, where $n$
is an integer, then for any indecomposable non projective $A$-module
$U$ the morphism $\varphi(U) : U\rightarrow \Sigma^n(U)$ is not an
isomorphism. Since $U$, $\Sigma^n(U)$ are both indecomposable,
$\varphi(U)$ is neither a split epimorphism nor a split monomorphism.
Thus $\varphi(U)$ composed with any almost vanishing
morphism yields zero. The result follows.
\end{proof}

In the statements of Theorem \ref{pgroupsstablecenter} and
Theorem  \ref{kleinfour}, in order to show that $Z^*(\modbar(A))$ is not
finite-dimensional in a particular degree we actually show that $\CI_A$ is not
finite-dimensional in that degree. We will need the following lemma, which is well-known
and strictly analogous to  \cite[Proposition 1.3]{Li2}, for instance.

\begin{Lemma} \label{tatestablecenter}
Let $p$ be a prime, $P$ a finite $p$-group and $k$ a field of
characteristic $p$. Evaluation at the trivial $kP$-module $k$
induces a surjective graded $k$-algebra homomorphism $Z^*(kP)\rightarrow \hat H^*(P;k)$
whose kernel $\CI$ is a
nilpotent ideal.
\end{Lemma}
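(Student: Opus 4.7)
The plan is to follow the template of the analogous \cite[Proposition 1.3]{Li2}: verify that evaluation at $k$ is a well-defined graded $k$-algebra homomorphism, then produce a section on the level of natural transformations (giving surjectivity), and finally deduce nilpotence of the kernel from the filtration of any $kP$-module by copies of the trivial module.

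For the first point, given $\varphi \in Z^n(\modbar(kP))$, the morphism $\varphi(k) : k \to \Sigma^n k$ represents a class in $\hat H^n(P;k) = \Hombar_{kP}(k,\Sigma^n k)$. Multiplicativity and the correct sign behavior follow by evaluating the definition of the product in $Z^*$ at $k$ and matching the result with the Yoneda/cup product in Tate cohomology; this is a direct check.

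Surjectivity will exploit the fact that $kP$ is a cocommutative Hopf algebra. For any finitely generated $kP$-module $V$, tensoring a projective resolution of $k$ with $V$ over $k$ (where $kP$ acts diagonally) yields a projective resolution of $V$, because $kP \tenk V$ with diagonal action is free as a $kP$-module. This produces a natural isomorphism $\Sigma^n V \cong \Sigma^n(k) \tenk V$ in $\modbar(kP)$. Given $\eta \in \hat H^n(P;k)$ represented by $\eta : k \to \Sigma^n k$, I would set $\tilde\eta(V) = \eta \tenk \Id_V : V \to \Sigma^n V$. Naturality in $V$ is immediate from the functoriality of $\tenk$, the sign condition $\Sigma \tilde\eta = (-1)^n \tilde\eta \Sigma$ reduces to the corresponding condition for $\eta$, and $\tilde\eta(k) = \eta$ by construction, so $\mathrm{ev}_k$ is surjective.

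For nilpotence, let $\varphi \in Z^n(\modbar(kP))$ with $\varphi(k) = 0$. Since $P$ is a $p$-group and $k$ has characteristic $p$, the trivial module is the unique simple $kP$-module, so every finitely generated $kP$-module $V$ admits a filtration $0 = V_0 \subset V_1 \subset \cdots \subset V_\ell = V$ with $V_i/V_{i-1} \cong k$. Each short exact sequence $V_{i-1} \hookrightarrow V_i \twoheadrightarrow k$ yields an exact triangle in $\modbar(kP)$; the factorization axiom of a triangulated category together with $\varphi(k) = 0$ forces $\varphi(V_i)$ to factor through $\Sigma^n V_{i-1} \to \Sigma^n V_i$. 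Combining this with the multiplicative structure of $Z^*(\modbar(kP))$, an induction on $\ell$ shows that a sufficiently high power of $\varphi$ vanishes on $V$, yielding nilpotence. The main technical obstacle is the careful diagram chase needed to verify that the iterated factorizations compose correctly under the naturality of $\varphi$ and the Heller shift; once this is in place, the argument runs strictly in parallel with the one given in \cite{Li2}.
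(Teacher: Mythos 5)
Your surjectivity argument is sound: the construction $\tilde\eta(V)=\eta\tenk\Id_V$ using the cocommutative Hopf structure of $kP$ is exactly what underlies the route through the Tate analogue of Hochschild cohomology that \cite{Li2} takes (the lift $\hat H^*(P;k)\to\hat{HH}^*(kP)$ followed by the canonical map $\hat{HH}^*(kP)\to Z^*(\modbar(kP))$ is, unwound, this tensoring construction), so apart from needing to track the Koszul sign carefully in the check that $\Sigma\tilde\eta=(-1)^n\tilde\eta\Sigma$, this part matches the paper's argument in substance if not in packaging.

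The nilpotence argument, however, has a real gap. Filtering $V$ by a composition series $0=V_0\subset V_1\subset\cdots\subset V_\ell=V$ with $V_i/V_{i-1}\cong k$ and running the ghost/factorization induction shows, as you say, that $\varphi^\ell(V)=0$ where $\ell$ is the composition length of $V$. But $\ell$ grows without bound as $V$ ranges over $\mod(kP)$, so this does not produce a single exponent $N$ with $\varphi^N=0$, and hence does not show $\varphi$ is nilpotent in $Z^*(\modbar(kP))$, let alone that the ideal $\CI$ is nilpotent (which requires $\CI^N=0$ for one fixed $N$). The repair is to filter by radical layers rather than by a composition series: writing $\ell$ for the Loewy length of $kP$ (a finite number depending only on $P$), every finitely generated $kP$-module $V$ has the filtration $0=\rad^\ell V\subset\rad^{\ell-1}V\subset\cdots\subset\rad V\subset V$ of length at most $\ell$, with each subquotient semisimple and hence a finite direct sum of copies of $k$. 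Since any $\varphi\in\CI$ vanishes on $k$, naturality and additivity force $\varphi$ to vanish on every such semisimple layer, and the ghost-composition argument you outline then gives the uniform bound $\CI^{\ell}=0$. With this change of filtration the argument closes, and (for products of possibly distinct elements of $\CI$, not just powers of one $\varphi$) it establishes nilpotence of the ideal as claimed.
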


\begin{proof}
This is the same proof as that of \cite[Proposition 1.3]{Li2},
with $H^*(P;k)$, $HH^*(kP)$, $D^b(kP)$  replaced by Tate cohomology $\hat H^*(P;k)$, the
Tate analogue of Hochschild cohomology
$\hat{HH}^*(kP)$ and $\modbar(kP)$, respectively.
\end{proof}

\section{On some modules of period $1$ and $2$}

In order to construct elements in a given degree $n$ of the graded center of the stable
category of a symmetric algebra $A$ over a field $k$ it is not sufficient to construct
$A$-modules with period dividing $n+1$; we also need to have some control over the effect
of the shift on endomorphisms in order to verify the additional compatibility condition
with the shift functor of natural transformations belonging to $Z^n(\modbar(A))$. This is
the purpose of the elementary observations in this section. We use without further
comment that if $k$ has positive characteristic
$p$ and if $x\in Z(A)$ such that $x^{p-1}\neq 0$, $x^p = 0$, then
$1+x$ is an invertible element in $A$ of order $p$ and generates
a subalgebra which can be identified with the truncated polynomial
algebra $k[x]/(x^p)$ or the group algebra $k\langle 1+x\rangle$,
whichever is more convenient. The algebra $A$ may or may not be projective as $k\langle
1+x\rangle$-module; this will depend on $x$. If $A = kP$ is the group algebra of an
elementary abelian $p$-group $P$ with
minimal generating set $\{u_1,u_2,..,u_r\}$, then any nonzero $k$-linear
combination $x = \sum_{i=1}^r\ \lambda_i(1-u_i)$ has the property that
$A$ is projective as $k\langle 1+x\rangle$-module. The subgroups
generated by $1+x$ for those $x$ are called cyclic shifted subgroups of~$A$.

\begin{Lemma} \label{cyclicmodify}
Let $A$ be a finite-dimensional algebra over a field $k$
of prime characteristic $p$ and $x$ an element in $A$ such
that $x^p=0$, $x^{p-1}\neq 0$ and such that $A$ is projective as a
$k\langle 1+x\rangle$-module. Let $u\in A^\times$ such that $ux = xu$.
Then $(ux)^p=0$, $(ux)^{p-1}\neq 0$ and $A$ is projective as a
$k\langle 1+ux\rangle$-module.
\end{Lemma}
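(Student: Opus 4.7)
The plan is to handle the nilpotency and non-vanishing assertions by a one-line commutativity calculation, and then reduce the projectivity question to a dimension count using that both relevant subalgebras are local of dimension $p$.

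First I would observe that since $ux=xu$ and $u$ is invertible, $(ux)^m = u^m x^m$ for every non-negative integer $m$, so $(ux)^p = u^p x^p = 0$ and $(ux)^{p-1} = u^{p-1}x^{p-1}$ is nonzero (as $u^{p-1}$ is invertible and $x^{p-1}\neq 0$). Thus $1+ux$ has order $p$ and the subalgebra $k\langle 1+ux\rangle$ can be identified with $k[y]/(y^p)$ via $y\mapsto ux$, analogous to the identification $k\langle 1+x\rangle\cong k[y]/(y^p)$ via $y\mapsto x$. In particular both are local $k$-algebras of dimension $p$.

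Next I would use that projective modules over a local algebra are free, so the hypothesis becomes: $A$ is free as a left $k\langle 1+x\rangle$-module, and we must show $A$ is free as a left $k\langle 1+ux\rangle$-module. By Nakayama (or the standard characterization of freeness over a finite-dimensional local algebra), freeness is equivalent to $\dim_k A = p\cdot\dim_k(A/xA)$ in the first case, and to $\dim_k A = p\cdot \dim_k(A/(ux)A)$ in the second case, where $xA$ and $(ux)A$ denote the images of left multiplication by $x$ and $ux$ respectively (that is, the action of the radical of each subalgebra on $A$).

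The key observation is then that $(ux)A = u(xA)$ as $k$-subspaces of $A$; indeed, for every $a\in A$, $(ux)a = u(xa)$. Since $u$ is invertible in $A$, left multiplication by $u$ is a $k$-linear bijection of $A$, so $\dim_k(uxA) = \dim_k(xA)$ and therefore $\dim_k(A/(ux)A) = \dim_k(A/xA)$. Combined with the hypothesis $\dim_k A = p\cdot\dim_k(A/xA)$, this gives $\dim_k A = p\cdot\dim_k(A/(ux)A)$, so $A$ is free, and in particular projective, as a $k\langle 1+ux\rangle$-module.

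There is no real obstacle here; the one subtlety to watch is simply being careful about the side of the module action — the relevant quotient is by left multiplication because we regard $A$ as a left $k\langle 1+x\rangle$-module via inclusion, and it is precisely the invertibility of $u$ that makes the two quotients equidimensional.
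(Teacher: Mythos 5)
Your proof is correct and follows essentially the same route as the paper: both reduce projectivity over the local algebra $k\langle 1+ux\rangle$ to a $k$-dimension count that is unchanged when $x$ is replaced by $ux$, using that $u$ is invertible and commutes with $x$. The only cosmetic difference is the freeness criterion invoked --- you compare $\dim_k A$ with $p\cdot\dim_k(A/xA)$, while the paper compares $\dim_k(x^{p-1}A)$ with $\tfrac{1}{p}\dim_k A$; these are equivalent characterizations of freeness over $k[y]/(y^p)$.
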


\begin{proof} Since $x$ and $u$ commute we clearly have $(ux)^p=0$ and $(ux)^{p-1}\neq
0$. The hypothesis that $A$ is projective
as  $k\langle 1+x\rangle$-module is equivalent to $\dim_k(x^{p-1}A) =
\frac{1}{p}\dim_k(A)$. As $u$ is invertible and commutes to $x$, we have $(ux)^{p-1}A =
x^{p-1}A$, and hence $A$ is projective as
$k\langle 1+ux\rangle$-module.
\end{proof}

\begin{Lemma} \label{periodonemodules}
Let $A$ be a finite-dimensional algebra over a field $k$ of prime characteristic $p>0$.
Let $x\in A$ such that $x^{p-1}\neq 0$,  $x^p = 0$ and such that $A$ is free as right
$k\langle 1+x\rangle$-module.  Set $M = Ax^{p-1}$.

\smallskip\noindent (i)
If $p=2$ there is an isomorphism $\alpha : M\cong \Omega(M)$ in $\modbar(A)$
such that $\underline\varphi\circ\alpha = \alpha\circ\Omega(\underline\varphi)$
for any $\varphi\in\End_A(M)$.

\smallskip\noindent (ii)
If $p>2$ there is an isomorphism $\beta : M\cong \Omega^2(M)$ in $\modbar(A)$
such that $\underline\varphi\circ\beta = \beta\circ\Omega^2(\underline\varphi)$
for any $\varphi\in\End_A(M)$.

\smallskip\noindent (iii)
We have $\Endbar_A(M) \cong \End_A(M)$.
\end{Lemma}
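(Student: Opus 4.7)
I would prove the three parts in the order (iii), then (i) and (ii).

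For (iii), the essential input is the identity $x^{p-1}Ax^{p-1} = 0$. First note that for any $w\in A$, the element $x^{p-1}w$ is killed by $x$ on the left (as $x^p = 0$), and the freeness of $A$ as a right $B$-module with $B = k\langle 1+x\rangle \cong k[x]/(x^p)$ forces the left annihilator of $x$ in $A$ to equal $Ax^{p-1} = M$ (expand in a right $B$-basis and use $\operatorname{ann}_B(x) = kx^{p-1}$). Hence $x^{p-1}w \in M$, and consequently $x^{p-1}wx^{p-1} \in Ax^{2(p-1)} = 0$ since $2(p-1) \geq p$. Any $f \in \Hom_A^{pr}(M,M)$ decomposes as a sum of compositions of the form $M \xrightarrow{g} A \xrightarrow{h} M$; by cyclicity of $M$ and the constraint $x \cdot g(x^{p-1}) = 0$, one has $g(x^{p-1}) = ux^{p-1}$ and $h(1) = wx^{p-1}$ for some $u, w \in A$, giving $(h \circ g)(x^{p-1}) = u(x^{p-1}wx^{p-1}) = 0$ and hence $f = 0$.

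For (i) and (ii), I would identify $M$ with the induced module $A \tenB k_{\mathrm{triv}}$ via $a \otimes 1 \mapsto ax^{p-1}$, which is well-defined because $bx^{p-1} = \epsilon(b)x^{p-1}$ for $b \in B$, where $\epsilon$ is the augmentation. Projectivity of $A$ as a right $B$-module yields the canonical isomorphism $\Omega_A^n(A \tenB V) \cong A \tenB \Omega_B^n(V)$ for any $B$-module $V$ and any $n \geq 1$. Applied to $V = k_{\mathrm{triv}}$ over $B \cong kC_p$, the standard periodic resolution of the trivial module yields canonical isomorphisms $\Omega_B(k) = kx \cong k$ for $p=2$ and $\Omega_B^2(k) = kx^{p-1} \cong k$ for $p > 2$, both sending the indicated generator to $1$. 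Inducing up to $A$ produces the candidate isomorphisms $\alpha$ (for $p=2$) and $\beta$ (for $p > 2$).

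To verify the naturality condition $\underline\varphi \circ \alpha = \alpha \circ \Omega_A^n(\underline\varphi)$, transfer the equation through the Frobenius adjunction $\End_A(A \tenB k) \cong \Hom_B(k, \Res^A_B M)$ and invoke Lemma~\ref{OmegaAdjComm}, which shows that $\Omega_A^n$ on the left corresponds to $\Omega_B^n$ on the right. Since $\End_B(k) = k$ and the periodicity isomorphism $\Omega_B^n(k) \cong k$ is canonical, the operator $\Omega_B^n$ reduces to the identity on $\Hom_B(k, -)$ once the canonical identifications are applied on source and target. Pulling back through the adjunction yields the naturality. The principal technical obstacle is precisely this verification: one must carefully reconcile Frobenius adjunction, the canonical periodicity isomorphism $\Omega_B^n(k) \cong k$, and the commutativity of $\Omega$ with restriction. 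Lemma~\ref{OmegaAdjComm} is the essential tool that reduces the $A$-level statement to a triviality at the $B$-level, where it holds because $\End_B(k)$ is one-dimensional.
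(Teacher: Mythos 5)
Your proof of part (iii) is essentially the paper's, phrased a bit differently: both reduce to the identity $x^{2(p-1)}=0$. One caveat: when you pass from ``$x\cdot(x^{p-1}w)=0$'' to ``$x^{p-1}w\in Ax^{p-1}$'' via the annihilator of $x$, you are silently using that $x$ is central. Freeness of $A$ as a \emph{right} $B$-module controls elements killed by right multiplication by $x$ (i.e.\ $\{a:ax=0\}=Ax^{p-1}$); to control $\{a:xa=0\}$ you need $x\in Z(A)$. The paper makes exactly this assumption at the start of the section (``We use without further comment that if \dots $x\in Z(A)$ \dots''), and your argument should state that it relies on it.

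For parts (i) and (ii) your route is genuinely different from the paper's, and it has a gap at its crux. The paper works directly with the four-term sequence $0\to M\to A\xrightarrow{\cdot x}A\xrightarrow{\cdot x^{p-1}}M\to 0$, lifts $\varphi\in\End_A(M)$ to right multiplication by some $y\in A$, and observes that because $x$ is central this single lift $\cdot y$ is compatible with \emph{every} map in the periodic resolution, so $\Omega^2(\underline\varphi)$ (and $\Omega(\underline\varphi)$ for $p=2$) is again represented by $\varphi$ under the evident identifications. You instead identify $M\cong A\tenB k$ and push the question through Frobenius adjunction and Lemma~\ref{OmegaAdjComm}. The problematic step is the assertion that ``$\Omega_B^n$ reduces to the identity on $\Hom_B(k,-)$ once the canonical identifications are applied on source and target.'' The fact that $\End_B(k)=k$ does \emph{not} give this: $\Omega_B^n$ carries $\underline f\in\Hombar_B(k,W)$ to $\Omega_B^n(\underline f)\in\Hombar_B(\Omega_B^n k,\Omega_B^n W)$, and to compare it with $\underline f$ you must also choose an isomorphism $\Omega_B^n W\cong W$ for the particular $W=\Res^A_B M$, not merely for $k$; the outcome depends on that choice. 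What the argument really needs is a \emph{natural} isomorphism $\Id\cong\Omega_B^n$ on $\modbar(B)$ normalised to be $\kappa$ on $k$ (which does exist for $B\cong kC_p$, coming from the bimodule periodicity of a cyclic group algebra), together with a check that this natural isomorphism, transported along the adjunction, coincides with the $\alpha$ you built by inducing $\kappa$. Neither of these steps is carried out, and both are substantive. The paper's concrete lift $\cdot y$ avoids the issue entirely by exhibiting a single chain map covering the identity through the whole periodic resolution.
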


\begin{proof}
Since $A$ is free as right $k\langle 1+x\rangle$-module
we have an exact sequence of $A$-modules of the form
$$\xymatrix{ 0 \ar[r] & M \ar[r] & A \ar[r]^{\epsilon} & A \ar[r]^{\pi} & M\ar[r] &0
}$$
where $\epsilon$ is given by right multiplication with $x$ and $\pi$
is given by right multiplication with $x^{p-1}$. Thus for this choice
of projective covers we have $\Omega(M) = Ax$ and $\Omega^2(M) = M$.
Let $\varphi\in\End_A(M)$. Then $\varphi$ lifts to an endomorphism
$\psi\in\End_A(A)$ satisfying $\pi\circ\psi = \varphi\circ\pi$.
Let $y\in A$ such that $\psi(a) = ay$ for all $a\in A$. Thus
$\varphi(ax^{p-1}) = ayx^{p-1} = ax^{p-1}y$ for all $a\in A$.
Also, since $x$ is central we have $\epsilon\circ\psi = \psi\circ\epsilon$.
Thus $\psi$ restricts to the endomorphism $\varphi$ of $M$, which
shows (i) and (ii). Suppose now that $\varphi$ factors through a projective $A$-module.
Then $\varphi$ factors through the inclusion $M\subseteq A$, or equivalently, there is
$y\in A$ such that $\varphi(m) = my$ for all $m\in M$ and such that $Ay\subseteq M$.
But then in particular $y\in M = Ax^{p-1}$, hence
$y = ux^{p-1}$ for some $u\in A$. Since any $m\in M$ is of the form $m = ax^{p-1}$ for
some $a\in A$ we get that $\varphi(m) = \varphi(ax^{p-1}) = ax^{p-1}ux^{p-1} =
aux^{2(p-1)} = 0$,
whence (iii).
\end{proof}

The next observation shows that if the center of a finite $p$-group
$P$ has rank at least $2$ then the construction principle for modules of period $1$ and
$2$ over the group algebra $A = kP$ in the
two previous lemmas yields infinitely many isomorphism classes
whenever the field $k$ is infinite.

\begin{Proposition} \label{isomod}
Let $p$ be a prime, $P$ a finite $p$-group and $k$ a field of
characteristic $p$. Set $A = kP$. Suppose that $Z(P)$ has an
elementary abelian subgroup $E = \langle s\rangle \times \langle t\rangle$ of rank $2$,
for
some elements $s, t\in Z(P)$ of order $p$. Let $(\lambda,\mu), (\lambda',\mu')\in
k^2-\{(0,0)\}$. Set
$x = \lambda(s-1) + \mu(t-1)$ and $x' = \lambda'(s-1) + \mu'(t-1)$.
Then $Ax$ is absolutely indecomposable as left $A$-module, and
the following statements are equivalent.

\smallskip\noindent (i)
$Ax \cong Ax'$ as left $A$-modules.

\smallskip\noindent (ii)
$Ax = Ax'$.

\smallskip\noindent (iii)
The images of $(\lambda,\mu)$ and $(\lambda',\mu')$ in $\P^1(k)$
are equal.
\end{Proposition}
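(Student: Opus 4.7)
The plan is to establish absolute indecomposability by realizing $Ax$ as a cyclic quotient of the local ring $A$, note the trivial implications $(iii)\Rightarrow(ii)\Rightarrow(i)$, and then prove the main content $(i)\Rightarrow(iii)$ by restricting to $kE$ and computing the rank variety of the resulting $kE$-module.

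For absolute indecomposability, the key observation is that $s,t\in Z(P)$ forces $x\in Z(A)$, so $\mathrm{Ann}_A(x)$ is a two-sided ideal and $Ax\cong A/\mathrm{Ann}_A(x)$ as left $A$-modules. Any element of $A\setminus J(A)$ is a unit (since $A=kP$ is local), and therefore cannot annihilate the nonzero element $x$; hence $\mathrm{Ann}_A(x)\subseteq J(A)$, so $A/\mathrm{Ann}_A(x)$ is local with residue field $k$. Since $\End_A(Ax)$ is the opposite of this ring, it is local with residue field $k$, and $Ax$ is absolutely indecomposable. The implication $(iii)\Rightarrow(ii)$ is immediate, as $(\lambda':\mu')=(\lambda:\mu)$ means $x'=cx$ for some $c\in k^\times$, whence $Ax'=Ax$; and $(ii)\Rightarrow(i)$ is trivial.

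To prove $(i)\Rightarrow(iii)$, the idea is to detect the line $(\lambda:\mu)$ from the rank variety of the restriction to $kE$. A set of coset representatives of $E$ in $P$ commutes with $kE$ (as $E$ is central) and hence exhibits $A$ as a free left $kE$-module of rank $[P:E]$; consequently $Ax|_{kE}\cong (kEx)^{[P:E]}$, and it suffices to show that the rank variety of the $kE$-module $M=kEx$ is the line in $k^2$ through $(\lambda,\mu)$. For $(\alpha,\beta)\neq(0,0)$, set $y=\alpha(s-1)+\beta(t-1)$. If $(\alpha:\beta)=(\lambda:\mu)$, then $y$ is a nonzero scalar multiple of $x$, so $y^{p-1}$ acts on $M$ as a scalar multiple of $x^{p-1}$, sending $ax\in M$ to a multiple of $ax^p=0$; thus $y^{p-1}$ annihilates $M$, which precludes freeness of $M$ over $k\langle 1+y\rangle$. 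If $(\alpha:\beta)\neq(\lambda:\mu)$, then $\{x,y\}$ is a basis of $J(kE)/J(kE)^2$, so by Nakayama one obtains an algebra isomorphism $kE\cong k[x,y]/(x^p,y^p)$; in this presentation $M=(x)$ admits $\{x,x^2,\dots,x^{p-1}\}$ as a $k[y]/(y^p)$-basis and is therefore free. Hence the rank variety of $M$ is the line through $(\lambda,\mu)$, and since the rank variety is an invariant of isomorphism class unaffected by positive direct-sum powers, $(i)$ forces this line to coincide with the one for $x'$, which is $(iii)$. The main technical point will be justifying the change of presentation $kE\cong k[x,y]/(x^p,y^p)$ in the second case and the attendant monomial calculations; once this is in hand, the freeness dichotomy on cyclic shifted subgroups is immediate.
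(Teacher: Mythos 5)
Your proof is correct, but it takes a genuinely different route from the paper's on the main implication. The paper proves (i) $\Rightarrow$ (ii) $\Rightarrow$ (iii): for (i) $\Rightarrow$ (ii) it uses that $A = kP$ is symmetric, so any isomorphism $Ax \cong Ax'$ between left ideals extends to an endomorphism of ${_A}A$, which is right multiplication by some $y\in A$; centrality of $x$ then gives $Ax' = Axy = Ayx \subseteq Ax$, and symmetry yields equality. For (ii) $\Rightarrow$ (iii) the paper argues by contradiction that if $(\lambda,\mu),(\lambda',\mu')$ were independent then $Ax$ would contain both $s-1$ and $t-1$, forcing $A(s-1)=Ax=A(t-1)$ by a dimension count (all three have dimension $\frac{p-1}{p}\dim_k A$), contradicting $kE(s-1)\neq kE(t-1)$. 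You instead prove (i) $\Rightarrow$ (iii) directly: restrict to $kE$ using the free decomposition $A|_{kE}\cong(kEx)^{[P:E]}$, and read off the line $(\lambda:\mu)$ as the locus of cyclic shifted subgroups over which $kEx$ fails to be free, via the explicit dichotomy you establish (annihilation by $y^{p-1}$ when $y\parallel x$, freeness via the presentation $kE\cong k[x,y]/(x^p,y^p)$ otherwise). Your approach avoids appealing to the symmetry (self-injectivity) of $kP$ and instead invokes the rank-variety mechanism; the paper's is more elementary and self-contained, and happens to establish the clean intermediate fact (ii) directly rather than trivially from (iii). Both are sound; note that your freeness dichotomy is exactly the cyclic-shifted-subgroup criterion the paper uses implicitly elsewhere (Lemma \ref{cyclicmodify} and the dimension count in this very proof), so the two arguments are closer in spirit than they first appear.
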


\begin{proof}
Since $A = kP$ is split local, any quotient of $A$ as $A$-module
is absolutely indecomposable.
Suppose (i) holds. Since $A$ is symmetric, any isomorphism
$Ax\cong Ax'$ extends to an endomorphism of $A$,
which is given by right multiplication with an element $y\in A$.
In particular, $Axy = Ax'$. But $x\in Z(A)$ implies $Axy = Ayx
\subseteq Ax$. Thus $Ax'\subseteq Ax$. Exchanging the roles of $x$
and $x'$ shows $Ax = Ax'$, whence (ii) holds. The converse is
trivial. Suppose now that (ii) holds. Arguing by contradiction,
suppose that the images of $(\lambda,\mu)$, $(\lambda',\mu')$ in
$\P^1(k)$ are different, or equivalently, that $(\lambda,\mu)$, $(\lambda',\mu')$ are
linearly independent in $k^2$. Since $Ax = Ax'$ contains any $k$-linear combination of
$x$ and $x'$ it
follows that $Ax$ contains any $k$-linear combination of $s-1$ and
$t-1$. But then $A(s-1) = Ax = A(t-1)$ because all involved modules
have the same dimension $\frac{p-1}{p}\dim_k(A)$ as $A$ is projective
as right $kE$-module. However, clearly $kE(s-1)\neq kE(t-1)$, so
this is impossible. This shows that (ii) implies (iii). The converse
is again trivial.
\end{proof}

\begin{Corollary} \label{isomodcor}
Let $p$ be a prime, $P$ a finite $p$-group of rank at least $2$ and
$k$ an infinite field of characteristic $p$.
If $p=2$ then $kP$ has infinitely many isomorphism classes of absolutely
indecomposable modules of dimension $\frac{|P|}{2}$ whose period is $1$,
and if $p>2$ then $kP$ has infinitely many isomorphism classes of absolutely
indecomposable modules of dimension $\frac{|P|}{p}$ whose period is $2$.
\end{Corollary}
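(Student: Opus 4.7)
The plan is to realise the required modules as the family $M_{\lambda,\mu}=Ax^{p-1}$ built in Lemma \ref{periodonemodules}, where $A=kP$ and $x=\lambda(s-1)+\mu(t-1)$, and to parametrise their isomorphism classes by the points of $\P^1(k)$ using Proposition \ref{isomod}. The hypothesis that $P$ has rank at least $2$ places us in the setting of Proposition \ref{isomod}: I would pick an elementary abelian subgroup $E=\langle s\rangle\times\langle t\rangle$ of $Z(P)$ of rank $2$. For any $(\lambda,\mu)\in k^2-\{(0,0)\}$, the central element $x=\lambda(s-1)+\mu(t-1)$ satisfies $x^{p-1}\neq 0$ and $x^p=0$, and $A$ is free as right $k\langle 1+x\rangle$-module, since $A$ is free as right $kE$-module and $kE$ is free as right $k\langle 1+x\rangle$-module.

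I would then verify the four required properties of $M_{\lambda,\mu}=Ax^{p-1}$. Its $k$-dimension is $|P|/p$, because $A$ is free of rank $|P|/p$ as right $k\langle 1+x\rangle$-module and $x^{p-1}k\langle 1+x\rangle$ is one-dimensional; in particular $M_{\lambda,\mu}$ is non-projective, since every nonzero projective $kP$-module has dimension a positive multiple of $|P|$. It is absolutely indecomposable because it is a nonzero cyclic quotient of the split local algebra $A=kP$, realised as the image of the surjection $A\to A$ given by right multiplication by $x^{p-1}$. For the period, the projective cover $A\to M_{\lambda,\mu}$, $a\mapsto ax^{p-1}$, has kernel $Ax$, so $\Omega(M_{\lambda,\mu})=Ax$ and $\Omega^2(M_{\lambda,\mu})=Ax^{p-1}=M_{\lambda,\mu}$. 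When $p=2$ we have $Ax=Ax^{p-1}$, giving period $1$; when $p>2$ the dimensions $(p-1)|P|/p$ of $Ax$ and $|P|/p$ of $M_{\lambda,\mu}$ differ, forcing the period to be exactly $2$.

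For the infinitude of isomorphism classes I would split on $p$. When $p=2$, $M_{\lambda,\mu}=Ax$, so Proposition \ref{isomod} gives $M_{\lambda,\mu}\cong M_{\lambda',\mu'}$ iff $(\lambda,\mu)$ and $(\lambda',\mu')$ have the same image in $\P^1(k)$; since $k$ is infinite, so is $\P^1(k)$. When $p>2$, I would identify $M_{\lambda,\mu}$ with $\Omega(Ax)$ and use that $\Omega$ is a self-equivalence of $\modbar(kP)$: any isomorphism $M_{\lambda,\mu}\cong M_{\lambda',\mu'}$ in $\mod(kP)$ is, since both sides are non-projective indecomposable, the same as such an isomorphism in $\modbar(kP)$, and is hence equivalent to $Ax\cong Ax'$, to which Proposition \ref{isomod} again applies. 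I do not anticipate any serious obstacle: the only minor subtlety is the mismatch in the case $p>2$ between what the proposition controls ($Ax$) and what the corollary asks for ($Ax^{p-1}$), which the Heller translate resolves cleanly.
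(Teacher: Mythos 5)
Your argument has a genuine gap at the very first step: you write that the hypothesis that $P$ has rank at least $2$ allows you to ``pick an elementary abelian subgroup $E=\langle s\rangle\times\langle t\rangle$ of $Z(P)$ of rank $2$''. This is false. The rank of $P$ is the maximal rank of an elementary abelian subgroup of $P$, not of $Z(P)$. For instance the dihedral group $D_8$ of order $8$ has rank $2$ (it contains Klein four subgroups) but $Z(D_8)$ is cyclic of order $2$, so has rank $1$. More generally, any extraspecial $p$-group has rank at least $2$ but centre of order $p$. Proposition~\ref{isomod} explicitly requires $E\leq Z(P)$, so you cannot invoke it for $A = kP$ in these cases, and Lemma~\ref{periodonemodules} also needs the element $x$ to lie in $Z(A)$ (its proof uses that right multiplication by $x$ commutes with right multiplication by any lift $y$ of an endomorphism of $M$).

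The paper's proof sidesteps this obstruction by a two-step argument: it takes $E$ to be an elementary abelian subgroup of rank $2$ of $P$ (with no centrality assumption), applies Proposition~\ref{isomod} and Lemma~\ref{periodonemodules} to the group algebra $kE$ itself (legitimate, since $E = Z(E)$), thereby obtaining infinitely many pairwise nonisomorphic absolutely indecomposable $kE$-modules of dimension $|E|/p = p$ with the prescribed period, and then applies the induction functor $\Ind^P_E$. Induction is exact and commutes with $\Omega$, hence preserves periodicity, and it sends these $p$-dimensional modules to $kP$-modules of dimension $|P|/p$. Finally, Mackey's formula shows that only finitely many isomorphism classes of indecomposable $kE$-modules can induce to a given indecomposable $kP$-module, which forces infinitely many isomorphism classes to appear upstairs. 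Your computation of $\Omega$-period by the projective-cover exact sequence and the dimension count distinguishing $Ax$ from $Ax^{p-1}$ when $p>2$ are fine, and your bridge from $Ax^{p-1}$ to $Ax$ via $\Omega$ is a reasonable way to invoke Proposition~\ref{isomod}; but without the induction step from $E$ to $P$ the argument only applies to the (strictly smaller) class of $p$-groups whose centre has rank at least $2$.
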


\begin{proof}
Let $E$ be an elementary abelian subgroup of rank $2$ of $P$.
By Proposition \ref{isomod} and Lemma \ref{periodonemodules}, the result holds for $kE$
instead of $kP$.  Induction $\Ind^P_E$ is an exact
functor, preserving periodicity, and by Mackey's formula, for any indecomposable
$kP$-module $M$ there are, up to isomorphism, only finitely many indecomposable
$kE$-modules $N$ satisfying $\Ind^P_E(N) \cong M$, which implies the  statement.
\end{proof}

\section{Proof of Theorem \ref{2groupsstablecenter} and 
Theorem \ref{pgroupsstablecenter}}                     

Let $p$ be a prime and $P$ a finite $p$-group of
rank at least $2$. Let $k$ be an algebraically closed
field of characteristic $p$. By Lemma \ref{tatestablecenter}, evaluation at the trivial
$kP$-module induces a surjective graded algebra homomorphism $Z^*(\modbar(kP))\rightarrow
\hat H^*(P;k)$
with nilpotent kernel $\CI$. Let $E = \langle s\rangle \times \langle t\rangle$ be an
elementary abelian subgroup of $Z(P)$ of rank $2$ for some elements
$s$, $t$ in $Z(P)$ of order $P$ and let $(\lambda,\mu)\in k^2-\{(0,0)\}$
Set $x = \lambda(s-1) + \mu(t-1)$ and $M = Ax^{p-1}$.
By Lemma \ref{periodonemodules} the module $M$ has
period $1$ if $p=2$ and period $2$ if $p$ is odd.
The almost split exact sequence ending in $M$ is of the form
$$\xymatrix{ 0 \ar[r] & \Omega^2(M) \ar[r] & E \ar[r] & M \ar[r] & 0}$$
hence represented by
an almost vanishing morphism $\zeta_M : M \rightarrow \Omega(M)$.
By \cite[Proposition 1.4]{Li2}, this defines a natural transformation $\varphi_M : \Id
\rightarrow \Omega$ as follows.
Set $\varphi_M(M) = \zeta_M$, set $\varphi_M(N) = 0$ if $N$ is indecomposable
non projective and not isomorphic to $M$ or $\Omega(M)$, and in the case
$p$ odd, set $\varphi_M(\Omega(M)) = -\Omega(\zeta_M)$.
Then, thanks to Lemma \ref{periodonemodules} again,
$\varphi$ is indeed a natural transformation belonging to
$Z^{-1}(\modbar(kP))$. If $p=2$ (resp. $p > 2$)then for any integer $n$ (resp. even
integer $n$) we have
$M \cong \Sigma^n(M)$, and hence $\varphi$ determines also an
element in $Z^{n-1}(\modbar(kP))$ for any integer $n$ (resp.
even integer $n$). By Lemma
\ref{isomod}, this implies that $Z^{n-1}(\modbar(kP))$ is infinite
dimensional for any integer $n$ if $p=2$ and for any even integer
if $p>2$. This proves Theorem \ref{2groupsstablecenter}
and Theorem \ref{pgroupsstablecenter}.

\begin{Remark}
D. J. Benson \cite{Bencomm} pointed out that it is easy to construct indecomposable
modules for finite $p$-group algebras with period
one for odd  $p$. One might therefore wonder whether
the stronger statement of Theorem \ref{2groupsstablecenter}
holds for $p$ odd as well. \end{Remark}

\section{Proof of Theorem \ref{kleinfour}}

Let $k$ be an algebraically closed field of characteristic $2$. Denote by $V_4 = \langle
s, t\rangle$ a Klein four group, with commuting involutions $s, t$. By results of
Ba\v{s}ev \cite{Bas}, Heller and Reiner \cite{HeRe} (see also \cite[4.3.3]{Ben} for an
expository account), for any positive integer $n$
and any $\lambda \in k \cup \{\infty\}$ there is, up to isomorphism, a unique
indecomposable $kV_4$-module $M^\lambda_n$ of dimension $2n$,
having a $k$-basis $\{a_1, a_2,\ldots,a_n, b_1,b_2,\ldots, b_n\}$ on which the
elements $x = s-1$ and $y = t-1$ act by $$xa_i=b_i\ (1\leq i\leq n),\ ya_i = \lambda b_i
+b_{i+1}\ (1\leq i <n),\ ya_n = \lambda b_n\ ,xb_i = 0 = yb_i\ (1\leq i\leq n)$$
provided that $\lambda\in k$, and by
$$xa_i = b_{i+1}\ (1\leq i<n),\ xa_n=0,\ ya_i=b_i\ (1\leq i\leq n),\ ya_n = b_n\ ,xb_i =
0 = yb_i\ (1\leq i\leq n)$$
if $\lambda = \infty$. Then $M^\lambda_n$, with $\lambda\in k\cup\{\infty\}$, is
a set of representatives of the isomorphism classes of indecomposable
$kV_4$-modules of dimension $2n$. This set can be indexed by the elements
in $\P^1(k)$ with $M^\lambda_n$ corresponding to the image of $(\lambda,1)$
in $\P^1(k)$ if $\lambda\in k$, and with $M^\infty_n$ corresponding to
the image of $(0,1)$ in $\P^1(k)$. The natural action of $GL_2(k)$ on
$\P^1(k)$ translates to a transitive action of $GL_2(k)$ on the set of
isomorphism classes of $2n$-dimensional indecomposable $kV_4$-modules.
The group $GL_2(k)$ can be identified with a group of $k$-algebra automorphisms
of $kV_4$, with $g = \left(\begin{array}{cc} \alpha & \beta\\ \gamma &
\delta\end{array}\right)$
acting on $kV_4$ by $g(x) = \alpha x+\beta y$ and $g(y) = \gamma x+\delta y$.
In this way, any $kV_4$-module $M$ gives rise via restriction along the
automorphism $g$ to a $kV_4$-module ${^gM}$, which
is equal to $M$ as $k$-vector space, with $z\in kV_4$ acting as $g(z)$ on $M$.
This defines another action of $GL_2(k)$ on the set of isomorphism classes of
$2n$-dimensional indecomposable $kV_4$-modules, and, of course, these actions
are compatible: one checks that for $\lambda\in k$ we have
$M^\lambda_n\cong {^g(M^0_n)}$, where
$g = \left(\begin{array}{cc} 1 & 0 \\ \lambda & 1 \end{array}\right)$,
and that $M^\infty_n \cong {^h(M^0_n)}$, where $h = \left(\begin{array}{cc} 0 & 1 \\ 1 &
0 \end{array}\right)$.
The point of these remarks is that in order to calculate (stable)
endomorphism algebras of the modules $M^\lambda_n$, identify almost vanishing
morphisms and check their invariance under the Heller operator, it
suffices to consider the case $\lambda = 0$.
We collect some basic facts on endomorphisms of the modules $M_n^\lambda$,
many of which are, of course, well-known (such as the fact that $M_n^\lambda$
has period $1$). In what follows we set $M_n = M_n^0$, for any positive
integer $n$.

\begin{Lemma}\label{zeroendomorphisms}
Let $n$ be a positive integer and denote by $\{a_1,a_2,..,a_n,b_1,b_2,..,b_n\}$ a
$k$-basis
of $M_n$ such that $xa_i = b_i$ for $1\leq i\leq n$,
$ya_i = b_{i+1}$ for $1\leq i < n$, $ya_n = 0$ and $xb_i = 0= yb_i$
for $1\leq i\leq n$. Let $\varphi\in\End_{kV_4}(M_n)$.

\smallskip\noindent (i)
We have $\Omega(M_n) \cong M_n$.

\smallskip\noindent (ii)
If $\varphi\in\End_{kV_4}^{pr}(\Ml_n)$ then $\Im(\varphi)\subseteq \soc(\Ml_n)$.

\smallskip\noindent (iii)
If $\Im(\varphi) = k\cdot b_1$ then $\varphi\not\in \End_{kV_4}^{pr}(M_n)$.

\smallskip\noindent (iv) For $1\leq i\leq n$ denote by $\varphi^n_i$ the endomorphism of
$M_n$
sending $a_i$ to $b_1$ and all other basis elements of $M_n$ to zero.
Then the set $\{\underline\varphi^n_i\}_{1\leq i\leq n}$ is linearly
independent in $\Endbar_{kV_4}(M_n)$.

\smallskip\noindent (v)
We have $\dim_k(\End_{kV_4}^{pr}(M_n)) = n^2-n$.
\end{Lemma}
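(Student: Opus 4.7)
The plan is to realise $\End_{kV_4}^{pr}(M_n)$ as the image of post-composition with a projective cover $\pi\colon P\to M_n$ and compute its dimension via rank-nullity together with part (i). Take $P=(kV_4)^n$ with a free basis $e_1,\ldots,e_n$ and $\pi(e_i)=a_i$; then $\ker(\pi)=\Omega(M_n)$, which by (i) is isomorphic to $M_n$. The $k$-linear map
\[
\pi_*\colon\Hom_{kV_4}(M_n,P)\longrightarrow\End_{kV_4}(M_n),\qquad \beta\mapsto\pi\circ\beta,
\]
has image $\End_{kV_4}^{pr}(M_n)$ (any map factoring through a projective lifts through the projective cover $P$) and kernel $\Hom_{kV_4}(M_n,\Omega(M_n))$, of dimension $\dim_k\End_{kV_4}(M_n)$ by (i). Since $kV_4$ is symmetric, $\Hom_{kV_4}(M_n,kV_4)\cong M_n^*$ has dimension $2n$, so $\dim_k\Hom_{kV_4}(M_n,P)=2n^2$. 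Rank-nullity therefore reduces (v) to the claim that $\dim_k\End_{kV_4}(M_n)=n^2+n$.

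To establish this, I would write $\varphi\in\End_{kV_4}(M_n)$ as $\varphi(a_i)=\sum_j p_{ij}a_j+\sum_j q_{ij}b_j$ and note that $\varphi(b_i)=x\varphi(a_i)=\sum_j p_{ij}b_j$ is forced by $x$-compatibility. Imposing $\varphi(ya_i)=y\varphi(a_i)$ for $i<n$ and $\varphi(ya_n)=0$ yields the relations $p_{i+1,j}=p_{i,j-1}$ for $j\geq 2$ and $i<n$, together with $p_{i,1}=0$ for $i\geq 2$ and $p_{n,j}=0$ for $j<n$. Iterating forces $(p_{ij})$ to be the upper-triangular Toeplitz matrix given by $p_{ij}=p_{1,j-i+1}$ for $i\leq j$ and $p_{ij}=0$ for $i>j$, contributing $n$ free parameters $p_{1,1},\ldots,p_{1,n}$; the coefficients $q_{ij}$ remain unconstrained, contributing $n^2$ more. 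Hence $\dim_k\End_{kV_4}(M_n)=n^2+n$, and subtracting yields $\dim_k\End_{kV_4}^{pr}(M_n)=2n^2-(n^2+n)=n^2-n$.

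The main obstacle is the combinatorial bookkeeping in the second paragraph: one must check carefully that the cascade of constraints on $(p_{ij})$ collapses to exactly the upper-triangular Toeplitz shape, with no hidden extra relations among the surviving parameters. Everything else is a formal consequence of the short exact sequence $0\to\Omega(M_n)\to P\to M_n\to 0$, part (i), and the symmetry of $kV_4$.
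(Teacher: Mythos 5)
Your argument for part (v) is correct, and it takes a genuinely different route from the paper. The paper's proof of (v) is a two-sided estimate: it deduces the upper bound $\dim_k\End^{pr}_{kV_4}(M_n)\le n^2-n$ from (iv), and then produces $n^2-n$ explicit linearly independent elements $\varphi_{i,j}=\pi\circ\psi_{i,j}\circ\iota$ of $\End^{pr}_{kV_4}(M_n)$ for the lower bound. You instead compute $\dim_k\End_{kV_4}(M_n)=n^2+n$ directly from the $x$- and $y$-compatibility constraints and apply rank--nullity to $\pi_*$, using (i) and the symmetry of $kV_4$. I checked the bookkeeping: the relations $p_{i+1,j}=p_{i,j-1}$ ($i<n$, $j\ge2$), $p_{i,1}=0$ ($i\ge2$), $p_{n,j}=0$ ($j<n$) do force $p$ to be upper-triangular Toeplitz with exactly $n$ free entries, and any choice of $q$ gives a homomorphism, so the count $n^2+n$ and hence $\dim_k\End^{pr}_{kV_4}(M_n)=2n^2-(n^2+n)=n^2-n$ is right. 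A pleasant feature of this route is that (v) is obtained without invoking (iii) or (iv) at all.

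However, the proposal only proves part (v); parts (i)--(iv) are not addressed, and (i) is used as an input. The closing remark that ``everything else is a formal consequence'' overstates the case: from (ii) and (v) one only learns that the $n^2$-dimensional space of socle-valued endomorphisms maps onto an $n$-dimensional subspace of $\Endbar_{kV_4}(M_n)$, not that the classes $\underline\varphi^n_i$ are linearly independent there -- that is precisely the content of (iii) and (iv) and needs its own argument (the paper solves explicitly for the coefficients $\alpha_{i,j}$ of a lift $\psi$ of $\varphi$ through $\iota$ and $\pi$). Since (iv) is used directly in the proof of Lemma \ref{kV4AuslanderReiten}, this is not a cosmetic omission. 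As a proof of (v) given (i), your argument is a clean and arguably simpler alternative; as a proof of the whole lemma, it has a genuine gap.
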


\begin{proof}
For $1\leq i\leq n$ denote by $s_i$ the element in
$(kV_4)^n$ whose $i$-th component is $1$ and whose other componenets are $0$.
Then $\{s_1,s_2,..,s_n\}$ is a basis of the free $kV_4$-module $(kV_4)^n$.
There is an injective $kV_4$-homomorphism
$$\iota : M_n \longrightarrow (kV_4)^n$$
such that $\iota(a_i) = ys_i+xs_{i+1}$ for $1\leq i < n$ and $\iota(a_n) = ys_n$, and
there is a surjective $kV_4$-homomorphism
$$\pi : (kV_4)^n \longrightarrow M_n$$
such that $\pi(s_i) = a_i$ for $1\leq i\leq n$. One checks that $\Im(\iota) = \ker(\pi)$,
which shows that $\Omega(M_n)
\cong M_n$, whence (i). If $\varphi\in \End_{kV_4}^{pr}(M_n)$,
the inclusion $\Im(\varphi)\subset \rad(M_n)$ is a general fact, and
we have $\rad(M_n) = \soc(M_n)$, which shows (ii). Since $\varphi$ factors through a
projective module, it factors through both
$\iota$ and $\pi$; thus there is $\psi\in\End_{kV_4}((kV_4)^n)$ such that
$\varphi = \pi\circ\psi\circ\iota$. For $1\leq i, j \leq n$ let
$\alpha_{i,j}\in k$ and $r_{i,j}\in \rad(kV_4) = xkV_4+ykV_4$,
viewed as element in the $j$-th component of $(kV_4)^n$, such that
$$\psi(s_i) = \sum_{j=1}^n\ \alpha_{i,j}\ s_j\ +\ r_{i,j}$$
Since $\Im(\iota) \subset \rad((kV_4)^n)$ one easily sees that the
we may assume $r_{i,j} = 0$. Two short verifications show that
$$\varphi(a_n) = \sum_{j=2}^n\ \alpha_{n,j-1}\ b_j$$
and, for $1\leq i < n$,
$$\varphi(a_i) = \alpha_{i+1, 1} b_1\ +\ \sum_{j=2}^n\
(\alpha_{i,j-1}+\alpha_{i+1,j})b_j$$
which also shows again that $\Im(\varphi)\subseteq\soc(M_n)$.
Moreover, this calculation shows that if $\Im(\varphi)\subseteq kb_1$,
then the equation for $\varphi(a_n)$ implies $\alpha_{n,j} = 0$
for $1\leq j < n$, and the equations for the $\varphi(a_i)$ then
imply inductively that $\alpha_{i,j} = 0$ for $1\leq j<i\leq n$, which
in turn yields $\varphi = 0$. This proves (iii). Statement (iv) is an
immediate consequence of (iii). For any pair $(i,j)$, with $1\leq i,j\leq n$
there is a unique endomorphism of $M_n$ sending $a_i$ to $b_j$ and all
other basis elements to zero. Thus the space of endomorphisms of $M_n$
whose image is contained in $\soc(M_n)$ has dimension $n^2$. It follows
form (iv) that $\dim_k(\End_{kV_4}^{pr}(M_n)) \leq n^2-n$.
Setting $\varphi_{i,j} = \pi\circ\psi_{i,j}\circ\iota$, where $\psi_{i,j}$
is the unique endomorphism determined by the coefficients $\alpha_{i,j} = 1$
and $\alpha_{r,s} = 0$ for $(r,s) \neq (i,j)$ an easy verification
shows that the set
$\{\varphi_{i,j}\ |\ 1\leq i\leq n,\ 1\leq j < n\}$ is linearly
independent in $\End_{kV_4}^{pr}(M_n)$, whence the equality in (v).
\end{proof}

\begin{Lemma} \label{morphisms}
For any positive integer $n$ denote by
$\{\alpha^n_1,\alpha^n_2,..,\alpha^n_n,\beta^n_1,\beta^n_2,..,\beta^n_n\}$
a $k$-basis of $M_n$ such that $xa^n_i = b^n_i$ for $1\leq i\leq n$,
$ya^n_i=b^n_{i+1}$ for $1\leq i\leq n-1$, $ya^n_n = 0$, and such
that $xb^n_i = 0 = yb^n_i$ for $1\leq i\leq n$.
For any positive integers $m<n$ there is a homomorphism
$\zeta_m^n:M_m\to M_n$ such that $\zeta(a^m_i) = a^n_{n-m+i}$ for
$1\leq i\leq m$, and a homomorphism $\xi^n_m : M_n\to M_m$ such that
$\xi_m^n(a^n_i)=a^m_i$  for $1\le i\le m$  and $\xi_m^n(a^n_i)=0$ for $m+1\le i\le n$.
Then the sequence
$$\xymatrix{ 0 \ar[r] & \Ml_m \ar[r]^{\zeta_m^n} & \Ml_n \ar[r]^{\xi_{n-m}^n} &
\Ml_{n-m}\ar[r] &0 }$$
is exact. In particular $\zeta_m^n$ is a non-split monomorphism and $\xi_m^n$ is a 
non-split epimorphism.
\end{Lemma}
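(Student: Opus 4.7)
My plan is to verify exactness directly on the explicit bases, since the statement amounts to routine bookkeeping. I would break it into three steps: well-definedness of the two maps, injectivity of $\zeta_m^n$ and surjectivity of $\xi_{n-m}^n$ by inspection, the equality $\Im(\zeta_m^n)=\ker(\xi_{n-m}^n)$ by dimension count, and finally the non-splitting.

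First I would check that the prescribed formulas extend to $kV_4$-homomorphisms. Since $b^\ell_i = xa^\ell_i$ for all $\ell$ and $i$, the action on the $b$-vectors is forced by the action on the $a$-vectors, so it suffices to verify compatibility with the remaining relations $ya^\ell_i = b^\ell_{i+1}$ (for $i<\ell$) and $ya^\ell_\ell = 0$. For $\zeta_m^n$ one computes $y\zeta_m^n(a^m_i) = ya^n_{n-m+i}$, which equals $b^n_{n-m+i+1}=\zeta_m^n(b^m_{i+1})$ for $i<m$ and equals $ya^n_n = 0$ for $i=m$. The analogous check for $\xi_{n-m}^n$ has one mild boundary case, namely $y\,\xi_{n-m}^n(a^n_{n-m}) = ya^{n-m}_{n-m}=0$, which must match $\xi_{n-m}^n(ya^n_{n-m}) = \xi_{n-m}^n(b^n_{n-m+1})=0$; this is consistent.

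Next, $\zeta_m^n$ sends the basis $\{a^m_i, b^m_i\}_{1\leq i\leq m}$ to the linearly independent subset $\{a^n_{n-m+i}, b^n_{n-m+i}\}_{1\leq i\leq m}$ of $M_n$, so it is injective; dually, $\xi_{n-m}^n$ sends $\{a^n_i, b^n_i\}_{1\leq i\leq n-m}$ onto a basis of $M_{n-m}$, so it is surjective. The containment $\Im(\zeta_m^n)\subseteq\ker(\xi_{n-m}^n)$ is immediate from the index ranges, since $\zeta_m^n$ lands in the span of basis vectors with index at least $n-m+1$, all of which are annihilated by $\xi_{n-m}^n$. Equality then follows from $\dim_k\Im(\zeta_m^n) = 2m = 2n-2(n-m) = \dim_k\ker(\xi_{n-m}^n)$.

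The final clause is forced: if either map split, one would obtain $M_n \cong M_m\oplus M_{n-m}$ as $kV_4$-modules, contradicting the indecomposability of $M_n$ given that both summands are nonzero since $0<m<n$. I do not anticipate any real obstacle here; the only point requiring a bit of care is keeping the index shift by $n-m$ straight while checking compatibility with the module action.
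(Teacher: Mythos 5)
Your proof is correct and is exactly the straightforward verification the paper alludes to: you check that the prescribed formulas on the $a^{\ell}_i$ extend uniquely to $kV_4$-homomorphisms (the only nontrivial checks being compatibility with the $y$-action, including the boundary cases you flag), read off injectivity and surjectivity from the bases, get $\Im(\zeta_m^n)=\ker(\xi_{n-m}^n)$ from the inclusion plus a dimension count, and deduce non-splitting from the indecomposability of $M_n$. This matches the paper's intent, which simply declares the proof a "straightforward verification."
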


\begin{proof} Straightforward verification.
\end{proof}

\begin{Lemma}\label{kV4AuslanderReiten}
Let $n$ be a positive integer. The following hold.

\smallskip\noindent (i)
The endomorphism $\underline\varphi^n_1$ in $\Endbar_{kV_4}(M_n)$ is an almost vanishing
morphism.

\smallskip\noindent (ii)
For any isomorphism $\alpha : M_n\cong\Omega(M_n)$ in $\modbar(kV_4)$
We have  $\alpha\circ\underline\varphi^n_1 = $
$\Omega(\underline\varphi^n_1)\circ\alpha$.
\end{Lemma}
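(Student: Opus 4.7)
The plan is to prove (i) and (ii) by direct computation with the explicit basis $\{a_i^n, b_i^n\}$ of $M_n$ and the projective cover $\pi : (kV_4)^n \twoheadrightarrow M_n$ from the proof of Lemma~\ref{zeroendomorphisms}(i). Set $A_i = y s_i + x s_{i+1}$ (with $s_{n+1} = 0$) and $B_i = x y s_i$, so that $\{A_i, B_i\}_{i=1}^n$ is a basis of $\Omega(M_n) \subset (kV_4)^n$ and $\alpha_0 : M_n \cong \Omega(M_n)$, $a_i^n \mapsto A_i$, $b_i^n \mapsto B_i$, is a canonical isomorphism.

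For (i), after reducing by linearity to morphisms $\psi : X \to M_n$ with $X \cong M_m^\lambda$ indecomposable non-projective and $\underline\psi$ not a split epimorphism (using Lemma~\ref{stablesplitEpi}), the image of $\varphi^n_1 \circ \psi$ lies in $k \cdot b_1^n$ and is controlled by the coefficients $c_{i,1}$ of $a_1^n$ in $\psi(a_i^X)$. A case-by-case analysis of the constraints imposed on $\psi$ by commutation with $x$ and $y$ shows that $c_{i,1} = 0$ for all $i$ in every case except $\lambda = 0$ with $m > n$, where only $c_{1,1}$ may be non-zero. For this remaining sub-case I construct an explicit lift $\tilde\psi : M_m \to (kV_4)^n$ with $\pi \circ \tilde\psi = \varphi^n_1 \circ \psi$, namely $\tilde\psi(a_1^m) = c_{1,1}\, x s_1$, $\tilde\psi(a_i^m) = c_{1,1}\, A_{i-1}$ for $2 \leq i \leq n+1$, and $\tilde\psi(a_i^m) = 0$ for $i \geq n+2$. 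Verifying the $kV_4$-linearity of $\tilde\psi$ and the relation $\pi \tilde\psi = \varphi^n_1 \circ \psi$ uses the identities $x A_j = B_j$, $y A_j = B_{j+1}$ for $j < n$, $y A_n = 0$, and the characteristic-$2$ cancellation $b_i^n + b_i^n = 0$; the recursion closes up because $m > n$ ensures no $B_j$ with $j > n$ is needed. Combined with $\underline\varphi^n_1 \neq 0$ from Lemma~\ref{zeroendomorphisms}(iv), this shows $\underline\varphi^n_1$ is almost vanishing.

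For (ii), I first verify the equation for $\alpha = \alpha_0$ by lifting $\varphi^n_1$ to $\tilde\varphi \in \End_{kV_4}((kV_4)^n)$ with $\tilde\varphi(s_1) = x s_1$ and $\tilde\varphi(s_i) = 0$ for $i \geq 2$; the restriction of $\tilde\varphi$ to $\Omega(M_n)$ represents $\Omega(\varphi^n_1)$ and sends $A_1 \mapsto B_1$ while annihilating the other basis elements, giving $\Omega(\varphi^n_1) \circ \alpha_0 = \alpha_0 \circ \varphi^n_1$. For an arbitrary isomorphism $\alpha$, write $\alpha = \alpha_0 \circ u$ with $u \in \Endbar_{kV_4}(M_n)^\times = k^\times + \rad\, \Endbar_{kV_4}(M_n)$; the equation then reduces to the centrality of $\underline\varphi^n_1$ in $\Endbar_{kV_4}(M_n)$. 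From the Toeplitz description of $\End_{kV_4}(M_n)$ in the proof of Lemma~\ref{zeroendomorphisms}(v), the radical consists of endomorphisms with vanishing diagonal coefficient, which forces $c_{i,1} = 0$ and hence $\varphi^n_1 \circ f = 0$ literally for $f$ in the radical; dually, $f \circ \varphi^n_1$ has image in $\sum_{j \geq 2} k b_j^n$, and each endomorphism of $M_n$ sending $a_1^n$ to $b_j^n$ for $j \geq 2$ lies in $\End_{kV_4}^{pr}(M_n)$ via the factorization $\pi \circ \psi_j \circ \iota$ with $\psi_j(s_1) = s_{j-1}$ and $\psi_j(s_i) = 0$ for $i \geq 2$.

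The main obstacle will be the explicit lift construction in the $m > n$ sub-case of (i); the other sub-cases reduce to routine constraint checks, and (ii) follows from essentially formal considerations once the basis-level computation for $\alpha_0$ is carried out.
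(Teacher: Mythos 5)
Your approach to part (i) is genuinely different from the paper's. The paper starts from an almost vanishing endomorphism of $M_n$, whose existence follows from Auslander--Reiten theory for symmetric algebras, and then pins it down: composing on the left with $\zeta_n^{n+1}$ shows the image lies in $\soc(M_n)$, so it is a combination $\sum_i \alpha_i\varphi^n_i$, and a downward induction using the maps $\zeta_j^n$ and $\xi_j^n$ of Lemma~\ref{morphisms} kills $\alpha_i$ for $i\geq 2$. You instead try to verify the defining condition of an almost vanishing morphism directly, checking $\varphi^n_1\circ\underline\psi = 0$ for every non-split-epimorphism $\underline\psi:X\to M_n$ with $X$ indecomposable. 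This is a legitimate strategy, and the explicit lift you construct in the sub-case $\lambda = 0$, $m>n$ is correct (I checked the $kV_4$-linearity constraints $x\tilde\psi(a_{i+1}^m)=y\tilde\psi(a_i^m)$ and the identity $\pi\circ\tilde\psi = \varphi^n_1\circ\psi$; they hold).

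There is, however, a genuine gap in your case analysis for (i): you reduce to $X\cong M_m^\lambda$, but the indecomposable non-projective $kV_4$-modules also include the odd-dimensional ones, namely the $\Omega$-translates $\Omega^j(k)$ of the trivial module, and these are not of the form $M_m^\lambda$. Since $\hat H^0(V_4;M_n)\cong\soc(M_n)$ is $n$-dimensional, $\Hombar_{kV_4}(\Omega^j(k),M_n)$ is nonzero for all $j$, so this case is not vacuous. You need a separate argument that $\varphi^n_1\circ\underline\psi=0$ for $\psi:\Omega^j(k)\to M_n$. For $j=0$ this is immediate, since any $\psi:k\to M_n$ has image in $M_n^{V_4}=\soc(M_n)$ and $\varphi^n_1$ kills the socle; for general $j$ one can reduce to $j=0$ via the adjunction $\Hombar(\Omega^j(k),M_n)\cong\Hombar(k,\Omega^{-j}(M_n))\cong\Hombar(k,M_n)$ (using that $M_n$ has period $1$) together with the compatibility of Lemma~\ref{OmegaAdjComm}, or argue directly that $\varphi^n_1$ annihilates the socle and has image in the socle. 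The paper's route avoids this entirely because the existence of the almost vanishing morphism is input from Auslander--Reiten theory.

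Your argument for (ii) is essentially the same as the paper's: you both verify the identity for a canonical $\alpha_0$ via the lift $\tilde\varphi$ (the paper's $\theta$) with $\tilde\varphi(s_1)=xs_1$, $\tilde\varphi(s_i)=0$ for $i\geq 2$, and then extend to arbitrary $\alpha$. You phrase the extension as centrality of $\underline\varphi^n_1$ in $\Endbar_{kV_4}(M_n)$ and prove it by showing $\underline\varphi^n_1\circ\underline r=0=\underline r\circ\underline\varphi^n_1$ for $r$ in the radical, using the Toeplitz description of $\End_{kV_4}(M_n)$ and the explicit factorization $\pi\circ\psi_j\circ\iota$; this is correct. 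The paper instead invokes the split-locality of $\End_{kV_4}(M_n)$ together with the fact that $\underline\varphi^n_1$ is almost vanishing, which gives the same conclusion more quickly once (i) is in hand. Your version has the minor advantage of not depending on (i), which matters given the gap noted above.
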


\begin{proof}
Since $M_n$ has period $1$, any almost vanishing morphism starting at
$M_n$ is an endomorphism of $M_n$.
Let $\psi:M_n\to M_n$ be a representative of an almost vanishing endomorphism.
We use the notation of the previous lemma, except that we drop the superscripts of basis
elements.
First compose $\psi$ with $\zeta_n^{n+1}$. Given that $\underline\psi$ is almost
vanishing and $\zeta_n^{n+1}$ is not an split monomorphism, $\zeta_n^{n+1}\circ\psi$ has
to be a zero morphism in $\modbar(kV_4)$. But any $kV_4$-homomorphism between
indecomposable non projective modules
that factors through a projective module has its image contained in the socle of the
target module. As $\zeta_n^{n+1}$ is a monomorphism, it implies that the image of $\psi$
has to be contained in $\soc(M_n)$.
Using Lemma \ref{zeroendomorphisms} (iv), $\psi$ can be chosen in the $k$-vector space
generated by $\{\varphi^n_i\,|\,1\le i\le n\}$. So there exist $\alpha_i\in k,\,1\le i\le
n$ such that $\psi=\sum_{i=1}^n \alpha_i \varphi^n_i$. The aim is to prove that
$\alpha_i=0$ for all $2\le i\le n$.
To do that we pre-compose $\psi$ with $\zeta_j^n$ where $j=n-i+1$. Again the class of
this composition has to be a zero morphism in $\modbar(kV_4)$.
The proof is by reverse induction on $i$, which amounts to proceeding
by induction over $j=n-i+1$.
If $j=1$ then $\psi\circ\zeta_1^n(a_1)=\alpha_n\varphi_n^n(a_n)=\alpha_n b_1$,
where the notation of basis elements of $M_n$ is as at the beginning of the
proof of Lemma \ref{zeroendomorphisms}. Pre-composing with $\xi_1^n$ one gets an
endomorphism $\psi\circ\zeta_1^n\circ\xi_1^n$ of $M_n$ that is equal to
$\alpha_n\varphi^n_1$. Since the class of this morphism has to be $0$ in $\modbar(kV_4)$
we get that $\alpha_n=0$.
Suppose now that $\alpha_i=0$ for all $i>n-j+1$. Pre-compose $\psi$ with $\zeta_j^n$; as
before the class of $\psi\circ\zeta_j^n$  in $\modbar(kV_4)$ is zero. By direct
computation $\psi\circ\zeta_j^n(a_1)=$ $\alpha_{n-j+1}\varphi^n_{n-j+1}(a_{n-j+1})=$
$\alpha_{n-j+1} b_1$ and $\psi\circ\zeta_j^n(a_i)=$
$\alpha_{n-j+1}\varphi^n_{n-j+i}(a_{n-j+1})=0$. Pre-compose now with $\xi_j^n$ and get an
endomorphism $\psi\circ\zeta_j^n\circ\xi_j^n$ of $M_n$ that is equal to
$\alpha_{n-j+1}\varphi^n_1$. The class of the latter morphism is zero in $\modbar(kV_4)$
if and only if $\alpha_{n-j+1}=0$.
The induction procedure stops at $j=n-1$, or equivalently, at $i = 2$,
whence (i). In order to prove (ii), note first that of the statement holds
for some isomorphism $\alpha$ it holds for any such isomorphism because
$\End_{kV_4}(M_n)$ is split slocal and $\underline\varphi^n_1$ is
almost vanishing. We use the notation introduced at the beginning of
the proof of Lemma \ref{zeroendomorphisms}; in particular, we have an injective
homomorphism $\iota : M_n\to (kV_4)^n$ and a surjective homomorphism
$\pi : (kV_4)^n\to M_n$. Define $\theta:(kV_4)^n\to(kV_4)^n$  by $\theta(s_1)=xs_1$ and
$\theta(s_j)=0$ for $2\le j\ne n$. A straightforward verification shows that the
following diagram commutes:
$$\xymatrix{
0\ar[r] & \Ml_n\ar[r]^\iota\ar[d]_{\varphi^n_1} &(kV_4)^n\ar[r]^\pi\ar[d]_\theta
&\Ml_n\ar[r]\ar[d]_{\varphi^n_1} &0\\
0\ar[r] & \Ml_n\ar[r]^\iota &(kV_4)^n\ar[r]^\pi              &\Ml_n\ar[r]                 
 &0
}$$
This shows that for some - and hence any - identification $M_n=\Omega(M_n)$
we have $\Omega(\underline\varphi^n_1) = \underline\varphi^n_1$, which completes
the proof.
\end{proof}

In conjunction with the remarks at the beginning of this section, this shows
that for any even dimensional indecomposable non projective $kV_4$-module $M^\lambda_n$
there exists, up to multiplication by a scalar, a unique element of the graded center in
any degree that is an almost vanishing endomorphism of  $M^\lambda_n$ and zero on all
indecomposable modules not isomorphic to $M^\lambda_n$. Now let $\eta$ be a natural
transformation belonging to the kernel $\CI$
of the canonical evaluation map $Z^*(\modbar(kV_4))\to \hat H^*(V_4;k)$.
Then $\eta$ is zero when evaluated at the trivial module, hence zero at any odd
dimensional inedcomposable $kV_4$-module, as these are precisely the $\Omega$-orbit of
$k$, up to isomorphism. In order to conclude the
proof of Theorem \ref{kleinfour} we need to show that for any even-dimensional
indecomposable $kV_4$-module $M$ the evaluation $\eta(M)$ is either zero
or almost vanishing. Thanks to the remarks at the beginning of this section, it
suffices to show this for the modules $M_n$, for $n$ a positive integer,
and this is what the following lemma does:

\begin{Lemma}
Let $\eta\in\CI$ and $n$ a positive integer.
Then $\eta(M_n)$ is a scalar multiple of $\underline\varphi^n_1$.
\end{Lemma}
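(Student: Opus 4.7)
My plan is to view $\eta(M_n)$ as an element $\underline{\psi}\in\Endbar_{kV_4}(M_n)$ with representative $\psi$, using the period-one identification $\Sigma^d M_n\cong M_n$ (where $d=\deg\eta$) from Lemma \ref{zeroendomorphisms}(i), and then to show $\underline\psi$ is a scalar multiple of $\underline{\varphi^n_1}$. The first step is to show $\psi$ annihilates $\soc(M_n)$. For each $l\in\{1,\ldots,n\}$, the inclusion $\iota_l:k\to M_n$ with $\iota_l(1)=b_l$ is a module homomorphism, and naturality of $\eta$ together with $\eta(k)=0$ gives $\underline\psi\circ\underline{\iota_l}=0$ in $\Hombar_{kV_4}(k,M_n)$. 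A short calculation (the norm element $\hat g=(s-1)(t-1)$ annihilates $M_n$) shows $\Hom_{kV_4}^{pr}(k,M_n)=0$, so in fact $\psi(b_l)=0$ for every $l$. Since $b_l=xa_l$ and $\psi$ is a module homomorphism, this forces $\psi(a_i)\in\soc(M_n)$. Writing $\psi(a_i)=\sum_l\beta_{i,l}\,b_l$ and reducing modulo $\End_{kV_4}^{pr}(M_n)$ via the congruences $\varphi_{i,l}\equiv\varphi^n_{i-l+1}$ (for $l\le i$) and $\varphi_{i,l}\equiv 0$ (for $l>i$) that follow from Lemma \ref{zeroendomorphisms}, one obtains $\underline\psi=\sum_{k=1}^n\gamma_k\,\underline{\varphi^n_k}$ with $\gamma_k=\sum_{r=1}^{n-k+1}\beta_{r+k-1,\,r}$; the task reduces to showing $\gamma_k=0$ for $k\ge2$.

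I would argue this by induction on $n$. The base case $n=1$ is immediate, and for $n\ge 2$ I would apply naturality of $\eta$ to the non-split monomorphism $\zeta_{n-1}^n:M_{n-1}\to M_n$ of Lemma \ref{morphisms}, together with the induction hypothesis $\eta(M_{n-1})=c\,\underline{\varphi^{n-1}_1}$. Up to the identifications of $\Sigma^d M_m$ with $M_m$ (which only contribute an overall nonzero scalar), this yields an equation
\[
\underline{\psi\circ\zeta_{n-1}^n} \,=\, c'\,\underline{\zeta_{n-1}^n\circ\varphi^{n-1}_1}
\]
in $\Hombar_{kV_4}(M_{n-1},M_n)$, for some $c'\in k$. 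Evaluating both sides on the generators $a_i^{n-1}$, the socle coordinates of the difference are $D_{i,l}=\beta_{i+1,\,l}-c'\delta_{i,1}\delta_{l,2}$, and this difference must therefore lie in $\Hom_{kV_4}^{pr}(M_{n-1},M_n)$.

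The main obstacle is an explicit description of $\Hom_{kV_4}^{pr}(M_{n-1},M_n)$. Imitating the computation in Lemma \ref{zeroendomorphisms}(v), I would factor any projective map $\phi:M_{n-1}\to M_n$ through $kV_4$, imposing the module-hom constraints $xz_i=yz_{i-1}$ and $yz_{n-1}=0$ on the images $z_i$ of $a_i^{n-1}$ and then composing with a free right-multiplicand in $M_n$; a direct computation shows that in characteristic $2$ the resulting socle coordinates $(P_{i,l})$ satisfy the $n-1$ independent linear relations
\[
\sum_{i=0}^{k}P_{n-1-k+i,\,1+i} \,=\, 0 \qquad (k=0,1,\ldots,n-2),
\]
and these cut out $\Hom_{kV_4}^{pr}$ exactly. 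Crucially, the entry $(i,l)=(1,2)$ where the correction $c'$ appears falls outside the support of each of these relations (it would require $k=n-1$, which is out of range), so substituting $D_{i,l}$ reduces them to $\sum_{i=0}^{k}\beta_{n-k+i,\,1+i}=0$, i.e.\ $\gamma_{n-k}=0$ for $k=0,1,\ldots,n-2$, giving $\gamma_m=0$ for $m=2,\ldots,n$. This completes the induction and gives $\underline\psi=\gamma_1\,\underline{\varphi^n_1}$.
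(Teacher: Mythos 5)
Your argument is essentially correct and reaches the right conclusion, but it differs from the paper's own proof in two notable ways, one of which is an improvement and one of which introduces an unnecessary and unjustified step.

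First, you \emph{add} a preliminary reduction that the paper glosses over. The paper asserts ``by Lemma \ref{zeroendomorphisms} there are coefficients $\alpha_i$ such that $\eta(M_n)=\sum\alpha_i\underline\varphi^n_i$,'' but Lemma \ref{zeroendomorphisms}~(iv) only gives that the $\underline\varphi^n_i$ are linearly independent in $\Endbar_{kV_4}(M_n)$, not that they span the radical; a direct count shows $\dim_k\Endbar_{kV_4}(M_n)=2n$, so the $\underline\varphi^n_i$ span a proper subspace of the radical once $n\geq 2$ (there are ``upper-triangular'' endomorphisms $a_i\mapsto a_{i+r}$ not accounted for). Your use of naturality against the socle inclusions $k\hookrightarrow M_n$ together with $\Hom^{pr}_{kV_4}(k,M_n)=0$ (which follows since $xyM_n=0$) cleanly forces the representative to annihilate the socle and hence to have image in the socle, putting $\eta(M_n)$ into the span of the $\underline\varphi^n_i$. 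This fills a genuine gap.

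Second, for the induction you use only the non-split monomorphism $\zeta^n_{n-1}$, which lands you in $\Hombar_{kV_4}(M_{n-1},M_n)$ and requires an explicit description of $\Hom^{pr}_{kV_4}(M_{n-1},M_n)$ by $n-1$ linear relations on the socle coordinates. I have checked these relations (they are telescoping sums whose interior terms cancel in characteristic $2$ and whose boundary terms vanish by convention), and they are correct and of the right codimension; the translation into $\gamma_{n-k}=0$ is right, and the $(1,2)$-entry is indeed outside their support. The paper avoids this computation by instead using naturality against the surjections $\xi^n_{n-k+1}$ and then pre-composing with $\zeta^n_{n-k+1}$, so as to land in $\Endbar_{kV_4}(M_{n-k+1})$ where only the non-vanishing of $\underline\varphi^{n-k+1}_1$ (Lemma \ref{zeroendomorphisms}~(iii)) is needed. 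Your route is more computational but sound.

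The one claim you make that is not justified is that the identification $\Sigma^d M_m\cong M_m$ turns $\Sigma^d(\underline\zeta^n_{n-1})$ into a scalar multiple of $\underline\zeta^n_{n-1}$: there is no reason $\Hombar_{kV_4}(M_{n-1},M_n)$ should be one-dimensional, and establishing such a claim would require an argument of the flavour of Lemma~\ref{kV4AuslanderReiten}~(ii) for the maps $\zeta$. Fortunately you do not need it: since $\Sigma^d$ is an auto-equivalence, $\Sigma^d(\underline\zeta^n_{n-1})$ is not a split monomorphism because $\underline\zeta^n_{n-1}$ is not, and by the inductive hypothesis $\eta(M_{n-1})$ is a scalar multiple of the almost vanishing morphism $\underline\varphi^{n-1}_1$; hence $\Sigma^d(\underline\zeta^n_{n-1})\circ\eta(M_{n-1})=0$ outright. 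So the correct statement is simply $\underline{\psi\circ\zeta^n_{n-1}}=0$, i.e.\ $\psi\circ\zeta^n_{n-1}\in\Hom^{pr}_{kV_4}(M_{n-1},M_n)$, with no correction term $c'$ at all. Replace the scalar claim by this observation and the proof is clean.
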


\begin{proof}
The proof is by induction on $n$.
For $n=1$ the assertion is trivial. Suppose that $n>1$ and that $\eta_{M_l}$ is a
multiple of $\underline\varphi^l_1$ for all $1\le l<n$.  By Lemma
\ref{zeroendomorphisms}, there are coefficients $\alpha_i\in k$ such that
$\eta(M_n)=\sum_{i=1}^n \alpha_i \underline\varphi^n_i$. We prove that, for any for any
$k$ such that $n\geq k>1$, if $\eta(M_n) = \sum_{i=1}^k \alpha_i \underline\varphi^n_i$
then $\alpha_k=0$.
The following diagram in $\modbar(kV_4)$ is commutative:
$$\xymatrix{
\Ml_n\ar[r]^{\underline\xi_{n-k+1}^n\,\,}\ar[d]_{\eta(M_n)} &
\Ml_{n-k+1}\ar[d]^{\,\eta(M_{n-k+1})}\\
\Ml_n\ar[r]_{\underline\xi_{n-k+1}^n\,\,} & \Ml_{n-k+1}
}$$
By induction $\eta(M_{n-k+1})$ is an almost vanishing morphism or zero, so on one hand,
$\eta(\Ml_{n-k+1})\circ\underline\xi_{n-k+1}^n$ is zero in $\modbar(kV_4)$. On the other
hand,
an easy verification shows that
$$\underline\xi_{n-k+1}^n\circ\eta(\Ml_n)\circ\underline\zeta_{n-k+1}^n \ =\
\alpha_k\underline\varphi^{n-k+1}_1$$ This forces $\alpha_k= 0$, whence the result.
\end{proof}

Combining the above lemmas yields a proof of Theorem \ref{kleinfour}.

\section{Rank 1}

For completeness we include some remarks on the graded center
of the stable category of a finite $p$-group $P$ of
rank $1$. If $k$ is a field of odd characteristic $p$
then $P$ is cyclic, the algebra $kP$ has finite
representation type, and the structure of $\modbar(kP)$ is
calculated in \cite{KrYe} and \cite{KeLi}. If $p=2$
and $Q$ is a non cyclic finite $2$-group of rank $1$ then
$Q$ is a generalised quaternion group.

\begin{Theorem} \label{Qstablecenter}
Let $k$ be an algebraically closed field of characteristic $2$ and
let $Q$ be a generalised quaternion $2$-group.
The kernel $\CI$ of the canonical map $Z^*(\modbar(kQ)) \longrightarrow \hat H^*(Q;k)$
given by evaluation at the trivial $kQ$-module is infinite dimensional in odd degrees.
\end{Theorem}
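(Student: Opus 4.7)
Plan of proof. The plan is to mimic the strategy of Theorem \ref{2groupsstablecenter}, but replacing the rank-$2$ family of period-$1$ modules supplied by $Z(P)$ — unavailable here, since $Q$ has rank $1$ and $Z(Q)$ is cyclic of order $2$, so no rank-$2$ shifted subgroup exists inside $kQ$ — with a $\P^1(k)$-parameterised family of indecomposable periodic $kQ$-modules coming from a tube in the Auslander--Reiten quiver of $kQ$. The algebra $kQ$ is a tame symmetric algebra, every indecomposable non-projective $kQ$-module is periodic of period dividing $4$, and for each sufficiently large dimension the indecomposable $kQ$-modules of that dimension are organised into finitely many AR-components, some of which are $\P^1(k)$-parameterised tubes of pairwise non-isomorphic periodic modules.

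I would fix such a tube and pick a $\P^1(k)$-family $\{M_\lambda\}_{\lambda \in \P^1(k)}$ of pairwise non-isomorphic indecomposable periodic modules of common period $n \in \{1,2,4\}$ inside it. For each $M_\lambda$ the almost split sequence ending in $M_\lambda$ produces an almost vanishing morphism $\zeta_{M_\lambda} : M_\lambda \to \Omega(M_\lambda)$, and by \cite[Proposition 1.4]{Li2} this determines a natural transformation $\varphi_\lambda \in Z^{-1}(\modbar(kQ))$ that vanishes at every indecomposable non-projective module outside the $\Omega$-orbit of $M_\lambda$. Each $\varphi_\lambda$ evaluates to zero at the trivial module and therefore lies in $\CI$. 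Distinct parameters $\lambda$ give modules with disjoint $\Omega$-orbits, hence the family $\{\varphi_\lambda\}_\lambda$ is linearly independent in $Z^{-1}(\modbar(kQ))$, and since $k$ is algebraically closed, hence infinite, this provides the required infinite-dimensional subspace of $\CI$ in degree $-1$.

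To reach every odd degree, one uses the periodicity together with the triviality of signs in characteristic $2$: an isomorphism $M_\lambda \cong \Sigma^n(M_\lambda)$ shifts $\varphi_\lambda$ into every degree in $n\Z - 1$. If the tube can be chosen with $n\in\{1,2\}$ this at once covers all odd degrees; if only period-$4$ tubes are available then shifting covers the degrees $\equiv 3\pmod 4$, and the missing degrees $\equiv 1\pmod 4$ can be reached either by exhibiting a second family of period $\leq 2$ or, failing that, by multiplying by a suitably chosen degree-$2$ Tate class in $\hat H^2(Q;k) \subseteq Z^2(\modbar(kQ))$ that acts by an isomorphism on $M_\lambda$ in $\modbar(kQ)$, ensuring that the product does not fall under the annihilation of Lemma \ref{avideal}.

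The main obstacle I anticipate is the verification of the compatibility condition $\Sigma\varphi_\lambda = (-1)^n\varphi_\lambda\Sigma$ that defines membership in the graded center — equivalently, the existence of an isomorphism $M_\lambda\cong\Sigma^n(M_\lambda)$ conjugating $\zeta_{M_\lambda}$ to $\Omega^n(\zeta_{M_\lambda})$ — since the explicit models of the modules $M_\lambda$ in the tubes of $kQ$ are less transparent than the period-$1$ modules $Ax^{p-1}$ of Proposition \ref{isomod}. In characteristic $2$ the sign vanishes, and combined with the split locality of $\End_{kQ}(M_\lambda)$ and the uniqueness up to scalar of almost vanishing morphisms out of $M_\lambda$ (as exploited in the Klein four analysis), this compatibility should reduce to a routine verification along the lines of Lemmas \ref{periodonemodules} and \ref{Omegainvariance}, but making this explicit for modules in a tube is the most delicate step.
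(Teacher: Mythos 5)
The high-level strategy you propose is the right one and agrees with the paper: find a $\P^1(k)$-family of pairwise non-isomorphic periodic indecomposable modules, extract almost vanishing morphisms from their almost split sequences, shift them around using periodicity, and use disjointness of $\Omega$-orbits to get linear independence. But you have explicitly flagged and then left open the one step on which the entire argument depends, and which is genuinely non-routine.

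The gap is the compatibility condition $\Omega^n(\zeta_{M_\lambda}) = \zeta_{M_\lambda}$ (signs vanish in characteristic $2$) under a chosen identification $M_\lambda \cong \Omega^n(M_\lambda)$. You describe this as something that ``should reduce to a routine verification along the lines of Lemmas~\ref{periodonemodules} and~\ref{Omegainvariance},'' but both of those lemmas crucially used that the elements $x$ defining the relevant period-$1$ or period-$2$ modules were \emph{central}; for $Q$ of rank $1$ no such central elementary abelian subgroup of rank $2$ exists, which is precisely why the theorem is stated separately from Theorems~\ref{2groupsstablecenter} and~\ref{pgroupsstablecenter}. The paper's proof handles this by first reducing to $Q=Q_8$, then constructing the modules $M=kQ_8(z-1)x$ with $x=\lambda(s-1)+\mu(t-1)$ explicitly, observing via Lemma~\ref{kQ8center} that $kQ_8(z-1)$ happens to lie in $Z(kQ_8)$, and then carrying out the computation in Proposition~\ref{Q8period2}(iv) to verify that $\Omega^2$ fixes the almost vanishing morphism. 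That computation is the substance of the proof, not a routine check; a periodic module in an arbitrary tube could fail this compatibility, in which case it contributes nothing to the graded center, and your proposal gives no mechanism to select the tubes for which it succeeds. You would in effect need to redo the paper's Lemmas~\ref{kQ8center}--\ref{cyclicshifted4} and Proposition~\ref{Q8period2}.

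Two smaller issues. First, your fallback for a hypothetical period-$4$ tube is to multiply by a degree-$2$ Tate class acting invertibly on $M_\lambda$; but for generalized quaternion $Q$ the Tate cohomology is periodic of period $4$ and every positive-degree class of degree not divisible by $4$ is nilpotent, so no degree-$2$ class can act invertibly on a module with nontrivial support, and this route does not obviously exist. (The paper sidesteps this by exhibiting period-$2$ modules, which already cover all odd degrees.) Second, you treat $kQ$ for arbitrary generalized quaternion $Q$ uniformly via its AR-quiver; the paper instead proves the statement for $Q_8$ and then induces up to $Q$ using Lemma~\ref{Omegainvariance}, and this induction step needs its own justification (that $\Ind^Q_{Q_8}$ preserves both indecomposability and the $\Omega^2$-invariance of the almost vanishing morphism), which your proposal does not address.
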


The idea of the proof of Theorem \ref{Qstablecenter} is to reduce this to the
case where $Q = Q_8$ is quaternion of order $8$, and then to show that the
$2$-dimensional modules over the Klein four quotient group $V_4$ of $Q_8$ yield infinitely
many isomorphism classes of $kQ_8$-modules of period $2$ with the property that
$\Omega^2$ acts trivially on the almost vanishing morphisms starting at these
modules; this implies that
the almost vanishing morphisms determine infinitely many linearly independent
elements in each odd degree of the graded center of the
stable module category. In what follows, $k$ is a (not necessarily algebraically closed)
field of characteristic $2$ and $Q_8$  a quaternion group of order $8$, with generators
$s$, $t$ of order $4$ satisfying $s^2 = t^2$ and $tst^{-1} = s^{-1}$. Set $z = s^2$; this
is the unique (and hence central) involution of $Q_8$. We start with a few elementary
observations.

\begin{Lemma} \label{kQ8center}
We have $kQ_8(z-1)\subseteq Z(kQ_8)$.
\end{Lemma}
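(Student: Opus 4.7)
The plan is to reduce the statement to checking that the generators $g(z-1)$ for $g \in Q_8$ commute with every element of $Q_8$, and then extend by $k$-linearity. Since $z\in Z(Q_8)$ the element $z-1$ is already central in $kQ_8$, so it suffices to show $hg(z-1) = gh(z-1)$ for all $g,h\in Q_8$, i.e. $(hg-gh)(z-1) = 0$.

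The key structural fact I would invoke is that the commutator subgroup of $Q_8$ equals its center $\langle z\rangle = \{1,z\}$. Consequently, for any two elements $g,h\in Q_8$, either $hg = gh$ (in which case there is nothing to prove) or $hg = zgh$. In the latter case
$$hg - gh \ =\ zgh - gh\ =\ (z-1)gh,$$
so $(hg-gh)(z-1) = (z-1)^2 gh$.

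The proof then reduces to the observation that $(z-1)^2 = 0$ in $kQ_8$, which is immediate in characteristic $2$ since $z$ has order $2$: $(z-1)^2 = z^2 - 1 = 0$. Combining these cases gives $hg(z-1) = gh(z-1)$ for all $g,h\in Q_8$, and hence $g(z-1)\in Z(kQ_8)$ for every $g\in Q_8$. Extending linearly yields $kQ_8(z-1)\subseteq Z(kQ_8)$.

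There is no real obstacle here; the argument is essentially a two-line computation once one notes that $Q_8$ has commutator subgroup $\{1,z\}$ and that $z-1$ is a square-zero central element in characteristic $2$. The only thing to be a bit careful about is that $kQ_8(z-1)$ is defined as a left ideal, so one really does need to verify the right-multiplication invariance; but as shown above this is automatic once centrality on group-element generators is established.
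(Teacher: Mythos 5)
Your proof is correct, but it takes a different route from the paper. The paper observes that for $g$ of order $4$ in $Q_8$ the element $g(z-1) = g + gz = g + g^{-1}$ is the conjugacy class sum of $g$, hence central, and for $g\in\{1,z\}$ centrality is trivial. You instead use the structural fact $[Q_8,Q_8]=\langle z\rangle$ to write $hg-gh$ as either $0$ or $(z-1)gh$, and then kill the latter against $(z-1)$ using $(z-1)^2=0$ in characteristic $2$. Both arguments are essentially one-liners; the paper's is marginally slicker since it invokes the standard fact that class sums are central without any computation, while yours isolates exactly which properties of $Q_8$ and of $\mathrm{char}(k)=2$ are being used, and in doing so makes visible a more general principle: the argument applies verbatim to any finite $2$-group $G$ with $[G,G]\subseteq\langle z\rangle$ for a central involution $z$ (e.g.\ any extraspecial $2$-group), showing $kG(z-1)\subseteq Z(kG)$ in characteristic $2$.
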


\begin{proof}
We have $s(z-1) = s^3 + s$; this is the conjugacy class sum of $s$, hence
contained in $Z(kQ_8)$. The same argument applies to all elements of order
$4$ in $Q_8$, whence the result.
\end{proof}

\begin{Lemma} \label{cyclicshifted4}
Let $(\lambda,\mu)\in k^2$ and set $x = \lambda (s-1) + \mu (t-1)$.
Suppose that $\lambda^2+\lambda\mu+\mu^2\neq 0$.
Then $1+x$ is invertible of order $4$ in $(kQ_8)^\times$, and $kQ_8$ is
projective as left or right $k\langle 1+x \rangle$-module.
\end{Lemma}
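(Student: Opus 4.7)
The plan is to compute directly in $kQ_8$, exploiting that $z-1$ is a central nilpotent with $(z-1)^2=0$ (since $z$ is an involution) and that the quotient $kQ_8/(z-1)kQ_8$ is the Klein four algebra $kV_4$.

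First I would establish that $1+x$ has order exactly $4$. In characteristic $2$, $(1+x)^2=1+x^2$. Expanding using $(s-1)^2=z-1=(t-1)^2$ and $(s-1)(t-1)+(t-1)(s-1)=st+ts=s(1+s^2)t=(z-1)st$, I obtain $x^2=(z-1)y$ with $y:=\lambda^2+\mu^2+\lambda\mu\, st$. Since $z-1$ is central by Lemma \ref{kQ8center} and $(z-1)^2=0$, this gives $x^4=(z-1)^2 y^2=0$, hence $(1+x)^4=1$. To rule out a lower order, I would note that $(z-1)kQ_8$ -- which is both the image and the kernel of multiplication by $z-1$ -- consists of the elements $\sum_g c_g g$ with $c_g=c_{zg}$ for all $g\in Q_8$; inspection of the support of $y$ shows that $y\in (z-1)kQ_8$ iff $\lambda^2+\mu^2=0$ and $\lambda\mu=0$, iff $\lambda=\mu=0$. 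The hypothesis $\lambda^2+\lambda\mu+\mu^2\neq 0$ excludes this, so $x^2\neq 0$ and $1+x$ has order~$4$.

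For the projectivity claim, $k\langle 1+x\rangle\cong k[\epsilon]/(\epsilon^4)$ (with $\epsilon=x$) is a local $4$-dimensional algebra, so ``projective'' means ``free'', which in turn is equivalent to $\dim_k(x^3 kQ_8)=\dim_k(kQ_8)/4=2$. Writing $x^3=(z-1)xy$ and using that multiplication by $z-1$ descends to a $k$-linear isomorphism $kV_4\xrightarrow{\sim}(z-1)kQ_8$, this reduces to showing $\dim_k(\overline{xy}\cdot kV_4)=2$, where bars denote reduction modulo $(z-1)kQ_8$.

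The key calculation is thus in $kV_4$. Setting $u=\bar s-1$ and $v=\bar t-1$, so $u^2=v^2=0$ and $uv=vu$, expansion of $\bar s\bar t=(u+1)(v+1)$ gives $\bar y=c+\lambda\mu(u+v+uv)$ with $c:=\lambda^2+\lambda\mu+\mu^2$; combined with $\bar x=\lambda u+\mu v$ this yields
$$\bar x\,\bar y\ =\ c\,\bar x\ +\ \lambda\mu(\lambda+\mu)\,uv.$$
Right-multiplying $\bar x\bar y$ by $u$, $v$, $uv$ gives $c\mu\, uv$, $c\lambda\, uv$, and $0$ respectively, so $\bar x\bar y\cdot kV_4=k\,\bar x\bar y+k\,uv$. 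The hypothesis $c\neq 0$ guarantees that the component $c\bar x=c(\lambda u+\mu v)$ of $\bar x\bar y$ outside $k\,uv$ is nonzero, so this space is indeed $2$-dimensional. This yields $\dim_k(x^3 kQ_8)=2$; hence $kQ_8$ is free of rank~$2$ as a left $k\langle 1+x\rangle$-module, and the right-module case follows by the symmetric computation (or from $kQ_8$ being a symmetric algebra). The main obstacle is the bookkeeping in the $kV_4$-calculation: the hypothesis $\lambda^2+\lambda\mu+\mu^2\neq 0$ enters at exactly the final step, as precisely the condition ensuring that $\bar x\bar y$ has a nonzero component outside the socle $k\,uv$ of $kV_4$.
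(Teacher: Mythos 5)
Your proof is correct, and the projectivity step takes a genuinely different route from the paper's. Both proofs start from the identity $x^2 = (z-1)y$ with $y = (\lambda^2+\mu^2)\cdot 1 + \lambda\mu\cdot st$ central; the paper observes that the augmentation of $y$ is precisely $\lambda^2+\lambda\mu+\mu^2\neq 0$, so $y$ is a unit of the local algebra $kQ_8$, giving $x^2\neq 0$ at once, and then feeds $x^2 = y(z-1)$ into Lemma~\ref{cyclicmodify} to obtain projectivity over $k\langle 1+x^2\rangle$, finishing with the standard fact that a module over a cyclic $p$-group is free as soon as its restriction to the unique subgroup of order $p$ is free. You bypass both of those general facts by verifying the rank criterion $\dim_k(x^3 kQ_8)=2$ head-on, factoring $x^3=(z-1)xy$ and passing through the isomorphism $kV_4\cong(z-1)kQ_8$ to reduce to an explicit two-dimensionality computation in $kV_4$, where the hypothesis $\lambda^2+\lambda\mu+\mu^2\neq 0$ enters as the nonvanishing of the non-socle component $c\bar x$ of $\bar x\bar y$. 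Your version is more self-contained and makes the $kV_4$-shadow of the problem explicit, at the price of more bookkeeping; the paper's is shorter, exhibits $y$ directly as a unit (which is the cleanest way to see $x^2\neq 0$, rather than the support argument you give, which as written only uses $(\lambda,\mu)\neq(0,0)$), and delegates the work to two reusable lemmas. One small caveat: deducing the right-module case from $kQ_8$ being symmetric tacitly also uses that $k\langle 1+x\rangle$ is self-injective; the ``symmetric computation'' you also offer (using $kQ_8 x^3 = (z-1)\,kQ_8\,xy$ and commutativity of $kV_4$) is the cleaner justification.
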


\begin{proof}
We have $x^2 = (\lambda^2+\mu^2)(z-1) + \lambda\mu(s-1)(t-1) + \lambda\mu(t-1)(s-1)$. A
short calculation shows that
$(s-1)(t-1)+(t-1)(s-1) = st(z-1)$. This is an element in $Z(kQ_8)$, by
Lemma \ref{kQ8center}. Thus $x^2 = u(z-1)$, where $u = (\lambda^2+\mu^2)\cdot 1 +
\lambda\mu\cdot(st)$. The hypotheses on
$\lambda$ and $\mu$ imply that $u$ is invertible. Thus $\langle 1+x\rangle$
has order $4$ and Lemma \ref{cyclicmodify}
implies that $kQ_8$ is projective as $k\langle 1+x^2\rangle$ module. Since
$\langle 1+x^2\rangle$ is the unique subgroup of order $2$ of $\langle 1+x \rangle$
it follows that $kQ_8$ is projective as left or right $k\langle 1+x\rangle$-module.
\end{proof}

The hypothesis $\lambda^2+\lambda\mu+\mu^2\neq 0$ in the previous lemma
means that at leat one of $\lambda$, $\mu$ is nonzero, and if they are both
nonzero then $\frac{\lambda}{\mu}$ is not a cube root of unity.

\begin{Proposition} \label{Q8period2}
Let $(\lambda,\mu)\in k^2$ and set $x = \lambda (s-1) + \mu (t-1)$.
Suppose that $\lambda^2+\lambda\mu+\mu^2\neq 0$.
Set $M = kQ_8(z-1)x$ and $N = kQ_8x$. Then $M$, $N$ are absolutely indecomposable, and
the following hold.

\smallskip\noindent (i) We have isomorphisms
$\alpha : M \cong \Omega^2(M)$ and $\beta : \Omega(M) \cong N$
in $\modbar(kQ_8)$.

\smallskip\noindent (ii) We have
$M \subseteq N^{\langle z\rangle} = kQ_8(z-1)$.

\smallskip\noindent (iii)
For any $\varphi\in\Hom_{kQ_8}(M,N)$ there is
an element $c\in kQ_8$ such that $\varphi(m) = cm$ for all $m\in M$;
in particular, $\Im(\varphi)\subseteq M$.

\smallskip\noindent (iv) Set $\gamma = \Omega(\beta)\circ\Omega(\alpha)\circ\beta^{-1} :
N\cong \Omega^2(N)$. Then, for any $\varphi\in\Hom_{kQ_8}(M,N)$ we have
$\Omega^2(\underline\varphi) \circ \alpha = \gamma\circ\underline\varphi$.
\end{Proposition}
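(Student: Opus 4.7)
The plan is to identify $M$ and $N$ as modules induced from the shifted cyclic subgroup $H = \langle 1+x\rangle$ of order $4$ given by Lemma \ref{cyclicshifted4}. Extracting from the proof of that lemma the relation $x^2 = u(z-1)$ for a unit $u \in kQ_8$, one obtains $(z-1)x = u^{-1}x^3$ and hence $M = kQ_8(z-1)x = kQ_8 x^3$. Since $kQ_8$ is free on both sides over $kH$, dimension counts show that the canonical surjections $\Ind_H^{kQ_8}(kH \cdot x^i) \twoheadrightarrow kQ_8 x^i$ for $i = 1, 3$ are isomorphisms, giving $N \cong \Ind_H^{kQ_8}(J(kH))$ and $M \cong \Ind_H^{kQ_8}(k)$ respectively. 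A key structural observation driving most of the argument is that $N = kQ_8 x$ is a two-sided ideal: modulo $(z-1)$ the algebra $kV_4$ is commutative, so $x \cdot kQ_8 \subseteq kQ_8 \cdot x + (z-1)kQ_8$; and since $(z-1) = u^{-1}x^2 \in kQ_8 x$, we conclude $(z-1)kQ_8 \subseteq kQ_8 x$, hence $xkQ_8 \subseteq kQ_8 x$.

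Parts (i) and (ii) then follow quickly. For (i), the standard compatibility $\Omega_{kQ_8}(\Ind W) \cong \Ind(\Omega_{kH} W)$, together with $\Omega_{kH}(k) \cong J$ and $\Omega_{kH}(J) \cong k$ over $kH \cong k[t]/(t^4)$, yields $\beta : \Omega(M) \cong N$ and $\alpha : M \cong \Omega^2(M)$. Absolute indecomposability of $M$ and $N$ is clear since each is cyclic with one-dimensional top, so has local endomorphism algebra with residue field $k$. For (ii), $N^{\langle z\rangle} = N \cap \mathrm{Ann}_{kQ_8}(z-1) = N \cap (z-1)kQ_8 = (z-1)kQ_8 = kQ_8(z-1)$, using $(z-1)kQ_8 \subseteq N$ from the setup, and $M \subseteq (z-1)kQ_8 = kQ_8(z-1)$ by construction.

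For (iii), I would first observe that $M$ lies in $Z(kQ_8)$: for $m = (z-1)n \in M$ with $n \in N$ and any $a \in kQ_8$, $[m, a] = (z-1)[n, a]$, and $[n, a] \in (z-1)kQ_8$ by commutativity of $kV_4$, so $[m, a] \in (z-1)^2 kQ_8 = 0$. Writing $m_0 = (z-1)x$, one checks $\mathrm{Ann}_{kQ_8}(m_0) = N$, so $\Hom_{kQ_8}(M, N) \cong \{n \in N : xn = 0\}$. Since $xM = 0$ (because $xm_0 = u(z-1)^2 = 0$ in characteristic $2$, and for general $m = am_0$ one has $xm = (xa)m_0$ with $xa \in xkQ_8 \subseteq N = \mathrm{Ann}(m_0)$), and a dimension count gives both spaces dimension $2$, we obtain $\Hom_{kQ_8}(M, N) = M$. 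Given $\varphi \leftrightarrow n = \varphi(m_0) \in M$, pick $c \in kQ_8$ with $n = cm_0$; centrality of $n$ and $m_0$ forces $[c, kQ_8] \subseteq N = \mathrm{Ann}(m_0)$, and $N \cdot M = 0$ upgrades this to $[c, kQ_8] \cdot M = 0$, so left multiplication by $c$ is $kQ_8$-linear on $M$ and equals $\varphi$. The image $cM$ lies in $M$ because $M$ is a two-sided ideal, being central.

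The hard part will be (iv). Using (iii), $\varphi = L_c$ for some $c \in kQ_8$, and the isomorphisms $\alpha, \beta$ come from induced projective covers of $k$ and $J$ over $kH$. The Heller operator applied to $L_c : M \to N$ should again be left multiplication by $c$ on the identified Heller translates, because $\Omega$ of a morphism is obtained from its lift to projective covers, and left multiplication by $c$ lifts canonically. Under this interpretation, $\gamma = \Omega(\beta) \circ \Omega(\alpha) \circ \beta^{-1}$ is precisely the transport of $\alpha$ from $M$ to $N$ via $\beta$, and the verification of $\Omega^2(\varphi) \circ \alpha = \gamma \circ \varphi$ reduces to compatibility of $L_c$ with this transport. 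The subtlety is that $c$ need not be central in $kQ_8$; however the obstructions $[c, kQ_8]$ land in $N$, and since $N \cdot M = 0$ these obstructions vanish in the stable category, making the diagram commute.
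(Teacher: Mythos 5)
Your treatment of parts (i)--(iii) is correct and takes a genuinely different route than the paper. Where the paper handles (i) by a direct identification of kernels of right-multiplication maps on $kQ_8$, you transport the periodicity of $k$ and $J(kH)$ over the shifted cyclic subgroup $H = \langle 1+x\rangle$ through induction; both work, and yours makes the period-$2$ behaviour conceptually transparent. Your part (iii) is also different: the paper passes to $kV_4$ and uses commutativity of $kV_4$ to see that any $kV_4$-endomorphism of the cyclic module is left multiplication, while you compute $\Hom_{kQ_8}(M,N)\cong\{n\in N: xn=0\}=M$ by an annihilator dimension count using freeness over $kH$, then use centrality of $M$ and $NM=0$ to promote $n=cm_0$ to a genuine left multiplication. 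Both are clean. Your observation that $N=kQ_8x$ is a two-sided ideal (via commutativity mod $(z-1)$ and $(z-1)\in kQ_8x$) is a nice structural remark that the paper does not isolate explicitly.

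Part (iv), however, is a sketch rather than a proof, and the one concrete mechanism you name is not right as stated. It is not true that ``left multiplication by $c$ lifts canonically'' to left multiplication by $c$ on the projective covers: $L_c$ on $kQ_8$ is not even a left-$kQ_8$-module endomorphism of $kQ_8$ when $c$ is non-central, so it cannot serve as a lift, and the relevant lifts have to be right multiplications. Concretely, with the projective covers $kQ_8\xrightarrow{\cdot(z-1)x}M$ and $kQ_8\xrightarrow{\cdot x}N$, a lift of $L_c:M\to N$ is right multiplication by the \emph{central} element $c(z-1)$ (using $[c,a](z-1)x=0$ to check the square commutes), so $\Omega(L_c):N\to M$ sends $n\mapsto nc(z-1)$, not $n\mapsto cn$; then a lift at the next stage is right multiplication by $c$, so $\Omega^2(L_c):M\to N$ sends $m\mapsto mc$, which agrees with $L_c$ because $M\subseteq Z(kQ_8)$. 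This is essentially the paper's argument: it builds the four-term commutative diagram with right multiplications, observes that by (iii) the right vertical lift can be taken to be $w=c(z-1)\in Z(kQ_8)$, and the centrality of $w$ (and of $x(z-1)$) is exactly what makes $\varphi'=\varphi$. Your heuristic that ``the obstructions $[c,kQ_8]\subseteq N$ and $NM=0$ make it work'' points at the right phenomenon, but you have to actually exhibit the lifts and check the squares; as written there is a gap, and the claim that the lift is $L_c$ is false.
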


\begin{proof} The modules $M$, $N$ are cyclic modules over the split local algebra
$kQ_8$, hence absolutely indecomposable.
Since $x^2 = u(z-1)$ for some $u\in (kQ_8)^\times$ we have
$x^2(z-1) = 0$. Thus $N = kQ_8x$ is contained in the kernel of the surjective
homomorphism $kQ_8\rightarrow M$ given by right multiplication with $(z-1)x$, hence equal
to this kernel because both have dimension $6$. The same argument
shows that the endomorphism $kQ_8 \rightarrow kQ_8$ given by right multiplication with
$x$ has kernel equal to $M$. This proves that for this choice of projective covers of $M$
and $N$ we get $\Omega(M) = N$ and
$\Omega^2(M) = M$, whence (i).
The inclusion $M\subseteq N$ is trivial. Since $z-1$ annihilates $M$, we have $M
\subseteq N^{\langle z\rangle} \subseteq kQ_8(z-1)$, where the
second inclusion uses the fact that $kQ_8$ is projective as right $k\langle
z\rangle$-module. Clearly also
$\Im(\varphi) \subseteq N^{\langle x\rangle}$. Now  $kQ_8$ is projective as a right
$k\langle 1+x\rangle$-module by \ref{cyclicshifted4}, thus $kQ_8$ is free of rank $2$ as
a $k\langle 1+x \rangle$-module. It follows that $kQ_8x$, when
viewed as right $k\langle 1+x\rangle$-module, is isomorphic to the direct sum
of two copies of the unique (up to isomorphism) $3$-dimensional
$k\langle 1+x\rangle$-module. The restriction of this $3$-dimensional module
to $\langle 1+x^2\rangle$ is of the form $k\oplus k\langle 1+x^2\rangle$, so
the space annihilated by $x^2$ has dimension $2 + 2 = 4$. Since $x^2 = u(z-1)$
for some invertible element $u$ it follows that the subspace of $N$
annihilated by $z-1$ has also dimension $4$, hence is equal to $kQ_8(z-1)$.
This proves (ii).
Let $\varphi\in\Hom_{kQ_8}(M,N)$.
It follows from (ii) that $\Im(\varphi) \subseteq kQ_8(z-1)$.
Thus we may view $\varphi$ as a homomorphism from the submodule $M$ of
$kQ_8(z-1)$ to $kQ_8(z-1)$. Both
modules are annihilated by $z-1$, so we may view this as a homomorphism of
$kV_4$-modules, where $V_4 = Q_8/\langle z\rangle$. But since $M$ is
a submodule of $kQ_8(z-1)\cong kV_4$ as $kV_4$-modules and since
$kV_4$ is commutative it follows that $\varphi$ is actually induced
by {\it left} multiplication with an element in $kV_4$. Pulling this back
to $kQ_8$ shows that $\varphi$, as a homomorphism of $kQ_8$-modules, is induced
by left multiplication on $M$ with some element $c\in kQ_8$, composed
with the inclusion $M\subseteq N$. This proves (iii). In order to prove (iv), we choose
notation such that $\alpha$, $\beta$ are
equalities (that is, we identify $N = \Omega(M)$ and $M = \Omega^2(M)$
in the obvious way described at the beginning of this proof), and consider a commutative
diagram of $kQ_8$-modules
of the form
$$\xymatrix{
0\ar[r] & M\ar[r]^\iota\ar[d]_{\varphi'} &kQ_8\ar[r]^{x}\ar[d]_{v}
&kQ_8\ar[r]^{x(z-1)}\ar[d]_{w} &M\ar[r]\ar[d]_{\varphi} &0\\
0\ar[r] & N\ar[r]_\kappa                 &kQ_8\ar[r]_{x(z-1)} &kQ_8\ar[r]_{x} &N\ar[r]   
             &0
}$$
where $\iota$, $\kappa$ are the inclusions, $v$, $w$ are suitable elements in $kQ_8$ and
where
an arrow labelled by an element in $kQ_8$ means the homomorphism induced by right
multiplication with that element. We need to show $\underline\varphi' =
\underline\varphi$. For $a\in kQ_8$, the commutativity
of the right square in this diagram means that $\varphi(ax(z-1)) = awx$.
The commutativity
of the middle square is equivalent to $axw = avx(z-1)$, hence $xw = vx(z-1)$. The
commutativity of the left square implies that $\varphi'(ax(z-1)) = $ $ax(z-1)v =$
$avx(z-1) =axw$, where we used that $x(z-1)\in Z(kQ_8)$ by Lemma \ref{kQ8center}. In
order to show that $\varphi=\varphi'$
it suffices to show that $w$ can be chosen in $Z(kQ_8)$. By (iii) there is an element
$c\in kQ_8$ such that
$\varphi'(x(z-1)) = cx(z-1) = xc(z-1)$, and hence we may take
$w = c(z-1)$, which is an element in $Z(kQ_8)$ by Lemma \ref{kQ8center}.
\end{proof}

\begin{proof} [Proof of Theorem \ref{Qstablecenter}]
We use the notation and assumptions of Theorem \ref{Qstablecenter};
in particular, $k$ is now algebraically closed.
We consider first the case where $Q = Q_8$.
It follows from Proposition \ref{isomod} applied to the Klein four group
$V_4 = Q_8/\langle z\rangle$ and from Proposition \ref{Q8period2} (i)
that if $(\lambda,\mu)\in k^2$ runs over a set of
representatives of elements in $\P^1(k)$ satisfying $\lambda^2+\lambda\mu+\mu^2\neq 0$
then $M_{\lambda,\mu} = kQ_8(z-1)(\lambda (s-1)+\mu (t-1))$ runs over pairwise
nonisomorphic indecomposable $kQ_8$-modules of period $2$.
It follows from Proposition \ref{Q8period2} (iv) that the almost vanishing morphisms
starting at these modules determine elements in the graded center of the stable module
category of $kQ_8$ in any odd degree, whence the result in the case $Q = Q_8$. For $Q$ a
generalised quaternion $2$-group we may identify $Q_8$ to a subgroup of $Q$, and then the
induction functor $\Ind^Q_{Q_8}$ sends the indecomposable modules of period $2$
considered in Proposition \ref{Q8period2} to indecomposable $kQ$-modules of period $2$.
Moreover, it follows from Proposition \ref{Q8period2} (iv) in conjunction with Lemma
\ref{Omegainvariance}
that the almost vanishing morphisms starting at these modules are
again invariant under $\Omega_{kQ}^2$, hence define elements in the
graded center of the stable category of $kQ$-modules.
\end{proof}

\end{document}